\tikzset{edge/.style = {->,> = stealth'}}
\definecolor{mygray}{RGB}{160,160,160}
\definecolor{CornflowerBlue}{rgb}{0.39, 0.58, 0.93}
\definecolor{LavenderMagenta}{rgb}{0.93, 0.51, 0.93}
\definecolor{PastelOrange}{rgb}{1.0, 0.7, 0.28}
\let\polishlcross=\l
\def\l{\ifmmode\ell\else\polishlcross\fi}
\let\emptyset=\varnothing
\let\theta=\vartheta
\let\rho=\varrho
\let\phi=\varphi
\def\NN{\mathbb N}
\newcommand{\Set}[1]{{\left\lbrace {#1} \right\rbrace}}
\def\set#1:#2{\Set{{#1} \colon {#2}}}
\theoremstyle{plain}
\newtheorem{thm}{Theorem}[section]
\newtheorem{prop}[thm]{Proposition}
\newtheorem{cor}[thm]{Corollary}
\newtheorem{obs}[thm]{Observation}
\newtheorem{claim}{Claim}
\theoremstyle{definition}
\def\lqedsymbol{\ifmmode$\lrcorner$\else{\unskip\nobreak\hfil
		\penalty50\hskip1em\null\nobreak\hfil$\rule{1.2ex}{1.2ex}$
		\parfillskip=0pt\finalhyphendemerits=0\endgraf}\fi}
\newenvironment{claimproof}[1][\proofname]
{%
	\proof[#1]%
}
{%
	\endproof%
}
\title{A star-comb lemma for finite digraphs}
\author{Florian Reich}
\address{Universit\"at Hamburg, Department of Mathematics, Bundesstrasse 55 (Geomatikum), 20146 Hamburg, Germany}
\email{florian.reich@uni-hamburg.de}
\keywords{connectivity, digraph, star, comb}
\begin{document}
	
\begin{abstract}
	It is well-known that for every set $U$ of vertices in a connected graph $G$ there is either a subdivided star in $G$ with a large number of leaves in $U$, or a comb in $G$ with a large number of teeth in $U$.
	
	In this paper we extend this property to directed graphs.
	More precisely, we prove that for every $n \in \NN$ and every sufficiently large set $U$ of vertices in a strongly connected directed graph~$D$,
	there exists a strongly connected butterfly minor of $D$ with $n$ teeth in $U$ that is either `shaped' by a star or `shaped' by a comb.
\end{abstract}
	
\maketitle
\section{Introduction}
One of the most fundamental results about unavoidable substructures is the existence of
\emph{stars}, that is graphs $K_{1,m}$ for $m \in \NN$, or paths in connected graphs~\cite{diestel}*{Proposition 9.4.1}:
\vspace{.5\baselineskip}
\begin{equation}
	\begin{aligned}\label{Fact:finite_undirected_simple}
		\parbox{\textwidth-3\parindent}{\emph{
				There exists a function $f$ such that, for every $n \in \NN$, every connected graph of order at least $f(n)$ contains either a star $K_{1,n}$ or a path $P_n$.
				}
				}
	\end{aligned}
	\tag{$\ast$}
\end{equation}

\vspace{.5\baselineskip}
\noindent
In the local context, the following variant of~\labelcref{Fact:finite_undirected_simple} is commonly used:
\vspace{.5\baselineskip}
\begin{equation}
	\begin{aligned}\label{Fact:finite_undirected}
		\parbox{\textwidth-3\parindent}{\emph{
				There exists a function $f'$ such that for every $n \in \NN$ and for every set $U$ of vertices in a connected graph $G$ with $|U| \geq f'(n)$ there is either a subdivided star or a comb in $G$ that has $n$ teeth in $U$.
		}}
	\end{aligned}
	\tag{$\ast \ast$}
\end{equation}

\vspace{.5\baselineskip}
\noindent
A \emph{comb} is the union of a path $P$ with disjoint (possibly trivial) paths having precisely their first vertex on $P$ and we refer to the last vertices of these paths as its \emph{teeth}.
We also refer to the leaves of a subdivided star as its \emph{teeth}.
Furthermore, we call a vertex $v$ of a comb a \emph{junction} if $v$ has degree $3$ or $v$ is a tooth and has degree $2$.

The unavoidable substructures of graphs of higher connectivity are also well-understood.
Cycles and bipartite graphs $K_{2,m}$ for $m \in \NN$ are the unavoidable topological minors of $2$-connected graphs~\cite{diestel}*{Proposition 9.4.2}.
Unavoidable minors for graphs of connectivity at least $3$ have been studied by Oporowski, Oxley, Thomas \cite{oporowski1993typical} and Geelen, Joeris \cites{geelen2016generalization,joeris2015connectivity}.
Furthermore, induced unavoidable structures have been investigated~\cites{chudnovsky2016unavoidable,allred2023unavoidable} with recent progress by Chudnovsky, Gartland, Hajebi, Lokshtanov and Spirkl~\cite{chudnovsky2024induced}*{Theorem 5.2} who proved a version of~\labelcref{Fact:finite_undirected} for induced graphs.
In all these results the size of the unavoidable substructure depends on the size of its host graph. More generally, one can relate to other parameters like for example tree-width, which has been studied in Robertson and Seymour's~\cite{robertson1986graph} grid theorem.

In the context of directed graphs, unavoidable substructures have been expressed in terms of butterfly minors.
Seymour and Thomassen~\cites{seymour1987characterization} discovered the notion of butterfly minor in 1987 and proved that odd bicycles are unavoidable butterfly minors of even directed graphs. Wiederrecht~\cite{wiederrecht2020digraphs} studied the unavoidable butterfly minors of strongly $2$-connected directed graphs.
Moreover, large unavoidable butterfly minors with respect to the directed tree-width~\cites{kawarabayashi2015directed} or the directed path-width~\cite{erde2020directed} of its host graph have been investigated by Kawarabayashi, Kreutzer and Erde.

The study of unavoidable substructures is tightly related to the question how certain classes of graphs can be generated from small families of base graphs.
Famous results are
Whitney's~\cite{whitney1931non} ear-decomposition for $2$-connected graphs and Tutte's~\cite{tutte1961theory} wheel theorem for $3$-connected graphs.
Whitney's ear decomposition applies also to strongly connected directed graphs.
For strongly $2$-connected directed graphs a decomposition was recently invented by Hatzel, Kreutzer, Protopapas, Stamoulis, Wiederrecht and the author~\cite{hatzel2024generating}.

Despite the fundamental nature of~\labelcref{Fact:finite_undirected_simple,Fact:finite_undirected}, so far no counterpart was known in the directed setting. We take on the task of investigating the unavoidable butterfly minors in strongly connected directed graphs.
Since for every pair $v, w$ of vertices there are two directed paths, namely a directed $v$--$w$~path and a directed $w$--$v$~path, the structure of strongly connected directed graphs is more complex than the structure of connected graphs.
This implies that the unavoidable butterfly minors of strongly connected directed graphs are more complex than stars and combs from \labelcref{Fact:finite_undirected}.

Given a graph $G$, let \emph{$\mathcal{D}(G)$} be the directed graph obtained from $G$ by replacing each edge $uv \in E(G)$ by directed edges $(u,v), (v,u)$.
We say that a directed graph $D$ is \emph{shaped by a star} (see~\cref{fig:shapedByStar}) if either $D$ is a directed cycle or there exists a subdivided star $S$ such that either
\begin{enumerate}[label=(\roman*)]
	\item\label{itm:star_1} $D= \mathcal{D}(S)$, or
	\item\label{itm:star_2} $D$ is obtained from $\mathcal{D}(S)$ by replacing the unique vertex $w \in V(\mathcal{D}(S))$ with $ d_S(w) > 2$ by a directed cycle of length $d_S(w)$ such that the strong components of $\mathcal{D}(S) - w$ become incident with distinct vertices of this directed cycle.
\end{enumerate}

\begin{figure}[ht]
	\begin{tikzpicture}
		
		\foreach \a in {1,2,4,6}{
			
			\draw[edge] ({1.9*cos(\a*360/6)},{1.9*sin(\a*360/6)}) to ({0.1*cos(\a*360/6)},{0.1*sin(\a*360/6)});
			\draw[edge]  ({0.1*cos(\a*360/6)},{0.1*sin(\a*360/6)}) to ({1.9*cos(\a*360/6)},{1.9*sin(\a*360/6)});
			\draw[LavenderMagenta] ({2*cos(\a*360/6)},{2*sin(\a*360/6)}) node {\huge .};
		}
		
		\foreach \a in {3,5}{
			\draw[edge] ({0.9*cos(\a*360/6)},{0.9*sin(\a*360/6)}) to ({0.1*cos(\a*360/6)},{0.1*sin(\a*360/6)});
			\draw[edge]  ({0.1*cos(\a*360/6)},{0.1*sin(\a*360/6)}) to ({0.9*cos(\a*360/6)},{0.9*sin(\a*360/6)});
			
			\draw[edge] ({1.9*cos(\a*360/6)},{1.9*sin(\a*360/6)}) to ({1.1*cos(\a*360/6)},{1.1*sin(\a*360/6)});
			\draw[edge]  ({1.1*cos(\a*360/6)},{1.1*sin(\a*360/6)}) to ({1.9*cos(\a*360/6)},{1.9*sin(\a*360/6)});
			\draw[] ({1*cos(\a*360/6)},{1*sin(\a*360/6)}) node {\huge .};
			\draw[LavenderMagenta] ({2*cos(\a*360/6)},{2*sin(\a*360/6)}) node {\huge .};
		}
		
		\draw (0,0) node {\huge.};
		
		\foreach \a in {2,3,4,6}{
			
			\draw[edge] ({5+1.9*cos(\a*360/6)},{1.9*sin(\a*360/6)}) to ({5+1.1*cos(\a*360/6)},{1.1*sin(\a*360/6)});
			\draw[edge]  ({5+1.1*cos(\a*360/6)},{1.1*sin(\a*360/6)}) to ({5+1.9*cos(\a*360/6)},{1.9*sin(\a*360/6)});
			\draw[edge]  ({5+1*cos(\a*360/6+5)},{1*sin(\a*360/6+5)}) to[] ({5+1*cos((\a+1)*360/6-5)},{1*sin((\a+1)*360/6-5)});
			
			\draw[]  ({5+1*cos(\a*360/6)},{1*sin(\a*360/6)}) node {\huge.};
			\draw[LavenderMagenta] ({5+ 2*cos(\a*360/6)},{2*sin(\a*360/6)}) node {\huge .};
		}
		
		\foreach \a in {1,5}{
			
			\draw[edge] ({5+1.4*cos(\a*360/6)},{1.4*sin(\a*360/6)}) to ({5+1.1*cos(\a*360/6)},{1.1*sin(\a*360/6)});
			\draw[edge]  ({5+1.1*cos(\a*360/6)},{1.1*sin(\a*360/6)}) to ({5+1.4*cos(\a*360/6)},{1.4*sin(\a*360/6)});
			\draw[edge] ({5+1.9*cos(\a*360/6)},{1.9*sin(\a*360/6)}) to ({5+1.6*cos(\a*360/6)},{1.6*sin(\a*360/6)});
			\draw[edge]  ({5+1.6*cos(\a*360/6)},{1.6*sin(\a*360/6)}) to ({5+1.9*cos(\a*360/6)},{1.9*sin(\a*360/6)});
			\draw[edge]  ({5+1*cos(\a*360/6+5)},{1*sin(\a*360/6+5)}) to[] ({5+1*cos((\a+1)*360/6-5)},{1*sin((\a+1)*360/6-5)});
			
			\draw[]  ({5+1.5*cos(\a*360/6)},{1.5*sin(\a*360/6)}) node {\huge.};
			\draw[]  ({5+1*cos(\a*360/6)},{1*sin(\a*360/6)}) node {\huge.};
			\draw[LavenderMagenta] ({5+ 2*cos(\a*360/6)},{2*sin(\a*360/6)}) node {\huge .};
		}
		
		\foreach \a in {1,2,...,6}{
			\draw[edge]  ({10+2*cos(\a*360/6+2)},{2*sin(\a*360/6+2)}) to[] ({10+2*cos((\a+1)*360/6-2)},{2*sin((\a+1)*360/6-2)});
			\draw[LavenderMagenta] ({10+ 2*cos(\a*360/6)},{2*sin(\a*360/6)}) node {\huge .};
		}
		
	\end{tikzpicture}
	\caption{Three directed graphs shaped by a star.}
	\label{fig:shapedByStar}
\end{figure}

\noindent
A directed graph $D$ is \emph{shaped by a comb} (see~\cref{fig:shapedByComb}) if there exists a comb $C$ such that either
\begin{enumerate}[label=(\alph*)]
	\item\label{itm:comb_1} $D= \mathcal{D}(C)$, or
	\item\label{itm:comb_2} $D$ is obtained from $\mathcal{D}(C)$ by replacing each junction $j$ of $C$ by a directed cycle of length $3$ such that, if $d_C(j) = 3$, the three strong components of $\mathcal{D}(C) - j$ become incident with distinct vertices of this directed cycle and otherwise the tooth $j$ is a vertex of this directed cycle and the two strong components of $\mathcal{D}(C) - j$ become incident with distinct other vertices of this directed cycle.
\end{enumerate}

\begin{figure}[ht]
	\begin{tikzpicture}        
		\foreach \a in {-6, -4, -2}{
			\draw[LavenderMagenta] (2*\a,1.4) node {\huge.};
			\draw[] (2*\a,0) node {.};   
			
			\draw[edge] ({2*\a+0.1},0) to ({2*\a+1.9},0);
			\draw[edge] ({2*\a+1.9},0) to ({2*\a+0.1},0);
			\draw[edge] ({2*\a},0.07) to ({2*\a},1.33);
			\draw[edge] ({2*\a},1.33) to ({2*\a},0.07);
		}
		
		\draw[] (-10,0) node {.};
		
		\draw[edge] (-9.9,0) to (-8.1,0);
		\draw[edge] (-8.1,0) to (-9.9,0);
		\draw[edge] (-10,0.07) to (-10,0.63);
		\draw[edge] (-10,0.77) to (-10,1.33);
		\draw[edge] (-10,1.33) to (-10,0.77);
		\draw[edge] (-10,0.63) to (-10,0.07);
		\draw[] (-10,1.4) node {.};
		\draw[] (-10,0.7) node {.};
		
		\draw[LavenderMagenta] (-10,1.4) node {\huge.};
		
		\draw[edge] (-5.9,0) to (-4.1,0);
		\draw[edge] (-4.1,0) to (-5.9,0);
		\draw[LavenderMagenta] (-6,0) node {\huge.};
		
		\draw[edge] (-2,0.07) to (-2,1.33);
		\draw[edge] (-2,1.33) to (-2,0.07);
		
		\draw[LavenderMagenta] (-2,1.4) node {\huge.};
		\draw[] (-2,0) node {.};

	\end{tikzpicture}

	\begin{tikzpicture}
		
		\draw[edge][] (0.1,0) to (1.5,0);
		\draw[edge] ({1.5},0) to ({0.1},0);
		\draw[edge] ({0},0.07) to ({0},1.33);
		\draw[edge] ({0},1.33) to ({0},0.07);
		
		\draw[edge] ({10},0.07) to (10,1.33);
		\draw[edge] (10,1.33) to (10,0.07);
		\draw[] ({0},0) node {.};
		\draw[] ({10},0) node {.};
		
		\foreach \a in {1, ..., 4}{
			\draw[edge] ({2*\a-0.3},0) to ({2*\a+0.3},0);
			\draw[edge] ({2*\a+0.3},0.07) to ({2*\a+0.1},0.49);
			\draw[edge] ({2*\a-0.1},0.49) to ({2*\a-0.3},0.07);
			
			\draw[] ({2*\a-0.4},0) node {.};
			\draw[] ({2*\a+0.4},0) node {.};
			\draw[] ({2*\a},0.56) node {.};
			
			\draw[edge] ({2*\a+1.5},0) to ({2*\a+0.5},0);
		}
		
		\foreach \a in {2, 4}{
			\draw[LavenderMagenta] (2*\a,1.4) node {\huge.};
			
			\draw[edge] ({2*\a},0.63) to ({2*\a},1.33);
			\draw[edge] ({2*\a},1.33) to ({2*\a},0.63);
		}
		
		\foreach \a in {1,2,3}{
			\draw[edge] ({2*\a+0.5},0) to ({2*\a+1.5},0);
		}
		
		\draw[LavenderMagenta] (6,0.56) node {\huge.};
		
		\draw[LavenderMagenta] (2,1.4) node {\huge.};
		\draw[] (2,0.98) node {.};
		\draw[edge] (2,0.63) to (2,0.91);
		\draw[edge] (2,0.91) to (2,0.63);
		\draw[edge] (2,1.05) to (2,1.33);
		\draw[edge] (2,1.33) to (2,1.05);
		
		\draw[LavenderMagenta] (0,1.4) node {\huge.};
		\draw[LavenderMagenta] (10,1.4) node {\huge.};
		
		\draw[edge] (9.2,0) to (9.9,0);
	\end{tikzpicture}
	\caption{Two directed graphs shaped by a comb.}
	\label{fig:shapedByComb}
\end{figure}

\noindent
Let $T$ be a subdivided star or a comb. Then the \emph{teeth} of a directed graph $D$ shaped by $T$ are the vertices of $D$ corresponding to the teeth of $T$.
Furthermore, we call all vertices of a directed cycle \emph{teeth}.

We remark that all directed graphs shaped by a star or shaped by a comb are strongly connected.
Our main result reads as follows:
\begin{restatable}{thm}{finite}\label{finite}
	There exists a function $g$ such that for every $n \in \NN$ and for every set $U$ of vertices of strongly connected directed graph $D$ with $|U| \geq g(n)$ there is a butterfly minor of $D$ that is either shaped by a star or shaped by a comb, and has $n$ teeth in $U$.
\end{restatable}
\noindent
We derive a variant of~\labelcref{Fact:finite_undirected_simple} for directed graphs from \cref{finite}:
\begin{cor}
	There exists a function $g$ such that, for every $n \in \NN$, every strongly connected directed graph $D$ of order at least $g(n)$ contains $\mathcal{D}(K_{1,n}), \mathcal{D}(P_n)$ or a directed cycle of length $n$ as a butterfly minor.
\end{cor}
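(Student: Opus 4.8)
The plan is to derive the corollary directly from \cref{main_theorem}. First I would apply that theorem to $D$ with $U := V(D)$: since $|U| = |V(D)| \geq m \gg n$, it yields a butterfly minor $D'$ of $D$ that is shaped by a comb or by a star and has $n$ teeth in $U$ --- which, as $U$ is all of $V(D)$, just says that $D'$ has at least $n$ teeth. Since the butterfly-minor relation is transitive, it then suffices to exhibit one of $\mathcal{D}(K_{1,n})$, $\mathcal{D}(P_n)$, or a directed cycle of length $n$ as a butterfly minor of $D'$, and I would do this by running through the alternatives in the definitions of \emph{shaped by a star} and \emph{shaped by a comb}.

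Suppose first that $D'$ is shaped by a star. If $D'$ is a directed cycle, then all of its (at least $n$) vertices are teeth, so it has length at least $n$; since every vertex of a directed cycle has in-degree $1$ and out-degree $1$, every edge is butterfly-contractible, and contracting edges one by one brings the length down to exactly $n$. If $D' = \mathcal{D}(S)$ for a subdivided star $S$ as in~\ref{itm:star_1}, then $S$ has at least $n$ leaves, and along each leg --- working from the leaf towards the centre --- I would repeatedly butterfly-contract the edge entering the current leaf vertex (this is admissible because a leaf has in-degree $1$) until the leg is reduced to a single pair of antiparallel edges; deleting all but $n$ of the legs then leaves $\mathcal{D}(K_{1,n})$. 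If $D'$ is as in~\ref{itm:star_2}, then its central directed cycle has length $d_S(w)$, which equals the number of leaves of $S$ and is therefore at least $n$; restricting to that cycle and contracting as above yields a directed cycle of length $n$.

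Now suppose $D'$ is shaped by a comb $C$ with spine $P$. As the (at least $n$) tooth paths of $C$ are pairwise disjoint and each meets $P$ in precisely its first vertex, $P$ has at least $n$ vertices. If $D' = \mathcal{D}(C)$ as in~\ref{itm:comb_1}, then $\mathcal{D}(P_n) \subseteq \mathcal{D}(P) \subseteq D'$ and we are done. If $D'$ is as in~\ref{itm:comb_2}, I would first pass to the subdigraph of $D'$ obtained by deleting every vertex that lies on a tooth path of $C$ but neither on the spine $P$ nor on a triangle that replaced a junction. In this subdigraph, each such triangle still has a vertex of in-degree $1$ and out-degree $1$ --- the tooth-side vertex of a degree-$3$ junction, once its tooth path has been removed, or the lone tooth of a degree-$2$ junction --- so butterfly-contracting one of the two triangle edges at that vertex collapses the triangle to a single pair of antiparallel edges joining the two vertices incident with the left and right parts of the spine. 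Performing this at every junction turns the subdigraph into $\mathcal{D}$ of a single path, namely $P$ with each junction vertex doubled, which has at least $|V(P)| \geq n$ vertices and hence contains $\mathcal{D}(P_n)$.

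Most of the work is bookkeeping, and the one step where I expect some care to be needed is the last case: one has to verify that, after the interiors of the tooth paths have been removed, the triangle edge selected for contraction really is the unique edge into (or out of) its endpoint, so that each collapse is a genuine butterfly contraction and not merely an ordinary edge contraction. The remaining cases rely only on the easy facts that every edge of a directed cycle is butterfly-contractible and that in $\mathcal{D}(S)$ the edge entering a leaf is the unique edge into it.
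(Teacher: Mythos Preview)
Your derivation is correct. The paper states the corollary immediately after \cref{main_theorem} without supplying a proof, and your approach---apply the theorem with $U=V(D)$ and then run through the five shapes to extract $\mathcal{D}(K_{1,n})$, $\mathcal{D}(P_n)$, or a directed $n$-cycle as a further butterfly minor---is precisely the routine verification that is being left to the reader.
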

All five types of graphs in~\cref{finite} are indeed necessary for a statement like~\cref{finite}:
Let $D$ be an arbitrary directed graph of one of these types.
It suffices to show that no essential butterfly minor of $D$ is of a different type than $D$.
A butterfly minor is \emph{essential} if it is strongly connected and preserves a large number of teeth of its host graph.
\begin{itemize}
	\item If $D$ is of type~\labelcref{itm:star_1}, every essential butterfly minor of $D$ contains a vertex of high in-degree since the centre vertex has an in-edge for each tooth of $D$ that it preserves.
	\item If $D$ is of type~\labelcref{itm:star_2}, every essential butterfly minor of $D$ contains a long directed cycle whose vertices are not teeth of $D$ as the centre cycle has a vertex for each tooth of $D$ that is preserved.
	\item If $D$ is of type~\labelcref{itm:comb_1}, the essential butterfly minors of $D$ contain a copy of $\mathcal{D}(P)$ for some long path $P$ with the property that many vertices of $\mathcal{D}(P)$ have in-degree $3$ or are teeth of $D$. This is due to the fact that remaining teeth of $D$ must be connected by a copy of~$\mathcal{D}(P)$.
	\item If $D$ is of type~\labelcref{itm:comb_2}, every essential butterfly minor of $D$ holds a directed cycle of length $3$ for each tooth (but the two outer) it preserves. Thus every essential butterfly minor of $D$ contains many directed cycles of length $3$.
	\item Essential butterfly minors of directed cycles are again directed cycles that contain many teeth of $D$.
\end{itemize}
These properties ensure that no essential butterfly minor of $D$ is of a different type than $D$.

We remark that if we consider strong minors rather than butterfly minors in the statement of~\cref{finite}, types~\labelcref{itm:star_2} and~\labelcref{itm:comb_2} can be omitted since such graphs can be strongly contracted to graphs of type~\labelcref{itm:star_1} or~\labelcref{itm:comb_1}, respectively.

In the second paper of this series~\cite{infinite} we extend a variant of~\labelcref{Fact:finite_undirected} for infinite graphs, which is known as the star-comb lemma and is a standard tool in infinite graph theory, to directed graphs.

This paper is organised as follows.
We introduce basic notations and butterfly minors in~\cref{sec:preliminaries}.
In~\cref{sec:double_rays_stars} we investigate the strong connectivity of pairs of vertices: We show that every pair of directed paths (e.g. a directed $a$--$b$~path and a directed $b$--$a$~path for vertices $a$ and $b$) can be simplified to a pair of \emph{laced} paths.
In~\cref{sec:centre} we show that for every large set $U$ of vertices there exists a strongly connected butterfly minor of a certain type containing many vertices of $U$:
These butterfly minors consist of a strongly connected `centre' together with a family of subgraphs each ensuring that some element of $U$ is strongly connected to the centre.
Finally, we prove~\cref{finite} in~\cref{sec:proof} by analysing the structure of this family of subgraphs.

\section{Preliminaries}\label{sec:preliminaries}
For standard notations we refer to Diestel's book~\cite{diestel}.
We set $[\ell]:= \{1, \dots, \ell\}$ for $\ell \in \NN$.
A \emph{directed $x_1$--$x_\ell$~path}, or simply a \emph{directed path}, of a directed graph is an alternating sequence $P=x_1e_2x_2 \dots x_{\ell -1} e_{\ell - 1} x_\ell$ of distinct vertices $(x_i)_{i \in [\ell]}$ and distinct edges $(e_i)_{i \in [\ell - 1]}$ such that $x_i$ is the tail of $e_i$ and $x_{i+1}$ is the head of $e_i$.
We call the vertices $x_2, \dots, x_{\ell - 1}$ the \emph{internal vertices} of $P$.
Further, we call $x_1$ the \emph{startvertex} and $x_\ell$ the \emph{endvertex} of $P$.
We also refer to the directed graph induced by $P$ as a \emph{directed path}.
Given two sets of vertices $X, Y$, the directed path $P$ is a \emph{directed $X$--$Y$~path} if $x_1 \in X$, $x_\ell \in Y$ and the internal vertices of $P$ are not contained in $X \cup Y$.
Further, for subgraphs $A, B$, we refer to directed $V(A)$--$V(B)$~paths as \emph{directed $A$--$B$~paths}.

Given some directed path $P$ and a vertex $x \in V(P)$ we define $Px$ to be the initial segment of $P$ ending in $x$ and define $xP$ analogously.
Given further some directed path $Q$ with $x \in V(P) \cap V(Q)$, the term $P x Q$ refers to the directed path obtained by the concatenation of $Px$ and $xQ$ along $x$.
We define an order on the vertex set of a directed path $P$ as follows: for $v, w \in V(P)$, we say $v \leq_P w$ if $w\in V(vP)$.

A \emph{double path} is the directed graph $\mathcal{D}(P)$ obtained from an undirected path $P$.
A \emph{directed cycle} is a sequence $C = x_1e_2x_2 \dots x_{\ell -1} e_{\ell - 1} x_\ell$ of distinct vertices $(x_i)_{i \in [\ell - 1]}$ with $x_1 = x_\ell$ and distinct edges $(e_i)_{i \in [\ell - 1]}$ such that $x_i$ is the tail of $e_i$ and $x_{i+1}$ is the head of $e_i$.
We call a directed graph an \emph{in-arborescence}, if its underlying undirected graph is a rooted tree and all edges are directed towards the root.
The terms root and leaves transfer from the underlying undirected graph to the in-arborescence.
\emph{Out-arborescences} are defined analogously.

A directed graph $D$ is called \emph{strongly connected}, if for every $u, v \in V(D)$ there exists a directed $u$--$v$~path in $D$.

Throughout this paper we implicitly make use of Ramsey's theorem:
\begin{thm}{\cite{diestel}*{Theorem 9.1.1}} \label{ramsey1}
	There exists a function $\phi$ such that, for every $n \in \NN$ and every $k \in \NN$, every complete undirected graph on $\phi(k,n)$ vertices with a $k$-colouring of its edges contains a monochromatic clique of size $n$.
\end{thm}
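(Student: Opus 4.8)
The plan is to prove the finite Ramsey theorem directly for $k$ colours by a greedy neighbourhood-shrinking argument, which in the process yields an explicit bound of the shape $m \leq k^{O(kn)}$; I will then remark that the more familiar two-colour recursion also works.

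\emph{Building a well-structured sequence of vertices.} Assume the edges of a complete graph $K_m$ are coloured with colours from a set of size $k$. Set $U_0 := V(K_m)$. Given a non-empty set $U_i$, pick any vertex $v_{i+1} \in U_i$; the edges joining $v_{i+1}$ to $U_i \setminus \{v_{i+1}\}$ fall into at most $k$ colour classes, so by the pigeonhole principle one such class, viewed as a vertex set $U_{i+1} \subseteq U_i \setminus \{v_{i+1}\}$, satisfies $|U_{i+1}| \geq (|U_i| - 1)/k$; let $c_{i+1}$ denote its colour. Carrying this out $N$ times is possible provided $m$ is large enough that $U_0, \dots, U_{N-1}$ are all non-empty, and unravelling the recursion $|U_{i+1}| \geq (|U_i|-1)/k$ shows that it suffices to take, say, $m := k^{N+1}$. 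The point of the construction is the chain of inclusions $U_{N-1} \subseteq \dots \subseteq U_1 \subseteq U_0$ together with the fact that every vertex of $U_i$ is joined to $v_i$ by an edge of colour $c_i$. Hence, for $i < j$ we have $v_j \in U_{j-1} \subseteq U_i$, so the edge $v_i v_j$ has colour $c_i$.

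\emph{Extracting the monochromatic clique.} Now apply the pigeonhole principle a second time, to the list of colours $c_1, \dots, c_{N-1}$: some colour $c$ occurs at least $(N-1)/k$ times, say in positions $i_1 < i_2 < \dots < i_r$ with $r \geq (N-1)/k$. For indices $a < b$, the edge $v_{i_a} v_{i_b}$ has colour $c_{i_a} = c$, so $\{v_{i_1}, \dots, v_{i_r}\}$ spans a clique all of whose edges have colour $c$. Choosing $N := kn + 1$ forces $r \geq n$, and then $m := k^{kn+2}$ (which certainly satisfies $m \gg n, k$) yields the required monochromatic clique on $n$ vertices.

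\emph{Main obstacle and an alternative.} There is no real obstacle; this is the classical argument, and the only care required is bookkeeping — keeping the nested sets $U_{N-1} \subseteq \dots \subseteq U_1$ straight so that ``$v_i v_j$ has colour $c_i$ whenever $i<j$'' is literally true, and tracking the two pigeonhole steps so that the final estimate $(N-1)/k \geq n$ holds. An equally valid route, matching the standard textbook treatment, is to first prove the two-colour statement via the recursion $R(s,t) \leq R(s-1,t) + R(s,t-1)$ (itself an instance of the same one-vertex pigeonhole argument, with base cases $R(s,1) = R(1,t) = 1$) and then reduce the case of $k$ colours to two colours by collapsing colours $2, \dots, k$ into a single class and inducting on $k$; this yields a weaker bound but the same conclusion.
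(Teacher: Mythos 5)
Your proof is correct. The paper does not prove this statement at all --- it simply cites it as Theorem~9.1.1 of Diestel's book and uses it as a black box --- so there is nothing in the paper to compare against; for what it is worth, your greedy neighbourhood-shrinking argument (choose $v_{i+1}\in U_i$, pass to its largest monochromatic neighbourhood $U_{i+1}$, observe that $v_iv_j$ gets colour $c_i$ for $i<j$, then pigeonhole on the colours $c_i$) is exactly the standard proof given in that source, extended directly to $k$ colours, and your bookkeeping ($N=kn+1$, $m=k^{N+1}$) checks out.
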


\subsection{Butterfly minors}

	An edge $e=(u,v)\in E(D)$ of a directed graph $D$ is called \emph{butterfly contractible} if $e$ is the only outgoing edge of $u$ or the only incoming edge of $v$.
	A directed graph $H$ is a \emph{butterfly minor} of a directed graph $D$ if $H$ can be obtained from a subgraph of $D$ by repeatedly contracting butterfly contractible edges.
	
\begin{obs}
	Let $D_1$, $D_2$, $D_3$ be directed graphs such that $D_1$ is a butterfly minor of $D_2$ and $D_2$ is a butterfly minor of $D_3$. Then $D_1$ is butterfly-minor of $D_3$.
\end{obs}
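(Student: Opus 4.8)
The plan is to reduce to two facts: (i) a subgraph of a butterfly minor of a digraph $G$ is again a butterfly minor of $G$; and (ii) if a digraph is obtained from a butterfly minor of $G$ by further butterfly contractions, then it too is a butterfly minor of $G$. Fact (ii) is immediate: concatenate the two contraction sequences, noting that each contracted edge is still butterfly contractible because it was so in the corresponding intermediate digraph. Granting (i), the statement follows quickly: by definition $D_1$ is obtained from some subgraph $S$ of $D_2$ by a sequence of butterfly contractions; applying (i) to the subgraph $S$ of the butterfly minor $D_2$ of $D_3$ shows $S$ is a butterfly minor of $D_3$, and then (ii) upgrades this to ``$D_1$ is a butterfly minor of $D_3$''.

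So it remains to prove (i). Let $S$ be a subgraph of a butterfly minor $Q$ of a digraph $P$, and fix a derivation of $Q$ from a subgraph $P'$ of $P$ by $k$ butterfly contractions. I would argue by induction on $k$. If $k=0$, then $S\subseteq Q=P'\subseteq P$ and there is nothing to prove. If $k\geq 1$, let $R$ be the digraph obtained from $P'$ by performing only the first of these contractions, of a butterfly contractible edge $e=(a,b)$. Then $Q$ is a butterfly minor of $R$ via the remaining $k-1$ contractions, so the induction hypothesis provides a subgraph $R'$ of $R$ from which $S$ arises by butterfly contractions. It now suffices to ``lift'' $R'$ through this single contraction: to produce a subgraph $\widetilde R$ of $P'$ such that $R'$ equals either $\widetilde R$ or the digraph obtained from $\widetilde R$ by contracting $e$, with $e$ still butterfly contractible in $\widetilde R$. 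Concatenating the contraction sequences then exhibits $S$ as a butterfly minor of $P'$, and hence of $P$.

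The lifting step is where the real care is needed, and I expect it to be the main obstacle. Let $z$ be the vertex of $R$ arising from the identification of $a$ and $b$. If $z\notin V(R')$, then $R'$ is already a subgraph of $P'$, so take $\widetilde R=R'$. Otherwise, build $\widetilde R$ from $R'$ by splitting $z$ back into $a$ and $b$, reinserting the edge $e$, and replacing each edge of $R'$ incident with $z$ by one of its preimage edges in $P'$ incident with $a$ or $b$ (at least one such preimage exists since $R$ arises from $P'$ by contracting $e$). A routine check of incidences confirms that contracting $e$ in $\widetilde R$ returns exactly $R'$. Finally, $e$ is still butterfly contractible in $\widetilde R$: if, say, $a$ had out-degree $1$ in $P'$, then in the subgraph $\widetilde R$, which still contains $e$, the out-degree of $a$ is at least $1$ and at most its out-degree in $P'$, hence equal to $1$ (and symmetrically if $b$ had in-degree $1$). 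The only genuine work is thus the bookkeeping of how butterfly contraction acts on incidences, in particular keeping track of the possibly several preimages of an edge meeting the contracted vertex and discarding loops that the contraction creates, together with the elementary observation just used that passing to a subgraph never destroys butterfly contractibility of a surviving edge.
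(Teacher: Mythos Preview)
Your argument is correct. The paper itself offers no proof of this observation---it is stated baldly and immediately followed by the alternative description of butterfly minors via tree-like models---so there is nothing to compare your route against directly. Your reduction to facts (i) and (ii) is sound, and the lifting step in your induction for (i) goes through as you describe: the key point, that a butterfly-contractible edge of $P'$ which survives in a subgraph $\widetilde R$ remains butterfly-contractible there, is exactly the observation that out-degrees (respectively in-degrees) can only drop when passing to a subgraph.

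One remark on presentation: since the paper introduces tree-like models (Lemma cited from \cite{amiri2016erdos}) immediately after this observation, an alternative and arguably tidier proof becomes available once that equivalence is in hand---compose a tree-like model $\mu_1$ of $D_1$ in $D_2$ with a tree-like model $\mu_2$ of $D_2$ in $D_3$ by taking, for each $v\in V(D_1)$, the union of the branch sets and branch edges in $D_3$ corresponding to the vertices and edges of $\mu_1(v)$. Verifying that the resulting branch sets again decompose into an in-arborescence and an out-arborescence sharing only their root takes a moment, but it avoids the somewhat delicate edge-by-edge lifting you carry out. That said, the paper places the observation \emph{before} the tree-like model lemma, so a proof straight from the contraction definition, as you give, is what is being implicitly claimed.
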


The vertex set of a butterfly minor is in general not a subset of the vertex set of its host graph.
Therefore in the statement of~\cref{finite} it is not immediately clear how a butterfly minor can have its teeth in $U$.
To overcome this problem, we identify (in each contraction step) the vertex $x_e$ obtained by butterfly contracting an edge $e = (u,v)$ with $u$ or $v$: we identify $x_e$ with $u$ if $e$ is the only in-going edge of $v$ and we identify $x_e$ with $v$ if $e$ is the only out-going edge of $u$.
Note that we allow $x_e$ to be identified with both $u$ and $v$.
In this way some vertex of a butterfly minor is an element of $U$ if we identified it with a vertex of $U$.

We show that these identifications do not create new connectivity:

\begin{prop}\label{propconnectedness}
	Let $H$ be a butterfly minor of a directed graph $D$ and $v,w \in V(H)$. If there exists a directed  $v$--$w$~path in $H$, then there exists a directed $v$--$w$~path in $D$.
\end{prop}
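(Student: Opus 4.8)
The plan is to use the tree-like model characterisation of butterfly minors recalled just above. By \cite{amiri2016erdos}*{Lemma 3.2} there is a tree-like model $\mu$ of $H$ in $D$ together with, for each $u\in V(H)$, an in-arborescence $T_{\mathrm{in}}^{\mu(u)}$ and an out-arborescence $T_{\mathrm{out}}^{\mu(u)}$ whose common root $r_u\in V(D)$ is, by the identification fixed above, the vertex of $D$ corresponding to $u$. It therefore suffices to produce, from a directed $v$--$w$~path in $H$, a directed $r_v$--$r_w$~path in $D$ (the case $v=w$ being trivial).

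First I would record the two elementary facts about arborescences that drive the argument: in an out-arborescence there is a directed path from the root to every vertex, and in an in-arborescence there is a directed path from every vertex to the root. Now let $P=u_0\,e_1\,u_1\,e_2\cdots e_k\,u_k$ be a directed $v$--$w$~path in $H$, so $u_0=v$ and $u_k=w$. For each $i$ the edge $\mu(e_i)\in E(D)$ has its tail $t_i$ in $T_{\mathrm{out}}^{\mu(u_{i-1})}$ and its head $h_i$ in $T_{\mathrm{in}}^{\mu(u_i)}$; choose the directed path $R_{i-1}$ from $r_{u_{i-1}}$ to $t_i$ inside $T_{\mathrm{out}}^{\mu(u_{i-1})}$ and the directed path $S_i$ from $h_i$ to $r_{u_i}$ inside $T_{\mathrm{in}}^{\mu(u_i)}$. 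Then
\[
 W\ =\ R_0\ \mu(e_1)\ S_1\ R_1\ \mu(e_2)\ S_2\ \cdots\ R_{k-1}\ \mu(e_k)\ S_k
\]
is a directed walk in $D$: for $0<i<k$ the piece $S_i$ ends at $r_{u_i}$ and $R_i$ begins at $r_{u_i}$, so consecutive pieces meet head-to-tail, and $\mu(e_i)$ links $t_i$ to $h_i$. Its first vertex is $r_{u_0}=r_v$ and its last is $r_{u_k}=r_w$. Since every directed walk from one vertex to another contains a directed path between the same two vertices (repeatedly delete a closed subwalk whenever a vertex is revisited), $W$ contains a directed $r_v$--$r_w$~path, i.e.\ the desired directed $v$--$w$~path in $D$.

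The construction is short; the point I expect to need the most care — the ``main obstacle'' — is the interface with the identification of $V(H)$ with a subset of $V(D)$: one must be sure the vertex of $D$ attached to $u_i$ is exactly the common root $r_{u_i}$, that this root lies in both $T_{\mathrm{in}}^{\mu(u_i)}$ and $T_{\mathrm{out}}^{\mu(u_i)}$ (so $S_i$ and $R_i$ genuinely meet there), and that passing from a walk to a path leaves the endpoints fixed. An alternative, model-free proof runs by induction on the number of butterfly contractions producing $H$ from a subgraph of $D$, using that contracting a single butterfly-contractible edge $(a,b)$ can only ``shorten'' a directed path: every out-edge of the contracted vertex comes from an out-edge of $a$ or of $b$, and from an out-edge of $b$ when $(a,b)$ is the unique out-edge of $a$ (symmetrically for in-edges when $(a,b)$ is the unique in-edge of $b$), so a directed $v$--$w$~walk in $H$ lifts through $(a,b)$ to a directed walk before the contraction. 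This avoids \cite{amiri2016erdos}*{Lemma 3.2} at the cost of a case analysis on where the contracted vertex sits on $P$ and bookkeeping of how endpoints migrate among the branch sets, so I would prefer the tree-like model argument above.
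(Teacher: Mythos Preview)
Your proof is correct and follows essentially the same route as the paper: both use the tree-like model of \cite{amiri2016erdos}*{Lemma~3.2} and thread together, along the $v$--$w$~path in $H$, the root-to-leaf paths in the out-arborescences, the images $\mu(e_i)$, and the leaf-to-root paths in the in-arborescences. Your write-up is in fact slightly more explicit than the paper's (you build a walk and then pass to a path, whereas the paper just says ``putting all this together''), but the argument is the same.
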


	\begin{proof}
		Since $H$ is obtained from $D$ by a sequence of butterfly contractions and edge deletions we can assume that $H$ is obtained from $D$ by butterfly contracting a single edge $e = (u,u')$.
		Let $x_e$ be the vertex of $H$ obtained by butterfly contracting $e$.
		
		By the definition of butterfly contraction, every directed path $P$ in $H$ extends to a directed path $P^*$ in $D$ under adding the edge $e$ whenever possible.
		If $e$ is the only ingoing edge of $u'$ and $P$ contains $x_e$, then $P^*$ contains $u$.
		Similarly, if $e$ is the only outgoing edge of $u$ and $P$ contains $x_e$, then $P^*$ contains $u'$.
		The way we identify vertices ensures that directed $v$--$w$~paths in $H$ turn into directed $v$--$w$~paths in $D$.
	\end{proof}
	
	Finally, we present two classes of substructures that can be butterfly contracted to single vertices:
	We call a rooted directed graph $T_{\textrm{in}}$ an \emph{in-arborescence} if the underlying undirected graph of $T_{\textrm{in}}$ is a tree and all its edges are oriented towards its root.
	An in-arborescence $T_{\textrm{in}}$ in a directed graph $D$ can be butterfly contracted to a single vertex if all vertices of $T_{\textrm{in}}$ but its root have out-degree one in $D$. In particular, the vertex obtained by butterfly contracting $T_{\textrm{in}}$ is identified with the root of $T_{\textrm{in}}$.
	We define out-arborescences analogously and note that an out-arborescence in a directed graph $D$ can be butterfly contracted to a single vertex if all its vertices but its root have in-degree one in $D$.

\section{Laced paths} \label{sec:double_rays_stars}
In this section we investigate the interaction of pairs of directed paths. We show that every two directed paths can be simplified to a pair of \emph{laced paths}. This provides a simple witness for the strong connectivity of two given vertices.

Two directed paths $P$ and $Q$ are \emph{laced} if either the paths $P$ and $Q$ are disjoint or there exist
$x_1, y_1, \dots, x_\ell, y_\ell \in V(P) \cap V(Q)$
for some $\ell \in \NN$ such that
\begin{itemize}
	\item $	 x_1 \leq_P y_1 <_P \dots <_P x_{\ell} \leq_P y_{\ell}$,
	\item $x_\ell \leq_Q y_\ell <_Q \dots <_Q x_1 \leq_Q y_1$,
	\item $x_i P y_i = x_i Q y_i$ for every $i \in [\ell]$,
	\item all segments $y_\ell Q x_{\ell - 1}, \dots, y_2 Q x_{ 1}$ are internally disjoint to $P$, and
	\item $Q x_\ell$ intersects $P$ only in $x_\ell$ and $y_1 Q$ intersects $P$ only in $y_1$.
\end{itemize}
See \cref{fig:laced_paths} for an illustration.

\begin{figure}[ht]
	\begin{tikzpicture}

		\foreach \a in {1,...,9}{
			\draw[edge][PastelOrange] ({\a+0.1},1) to ({\a+0.9},1);
			\draw[] (\a,1) node {\huge .};
			\draw[] ({\a+1},1) node {\huge .};
		}
		\draw[edge, dashed][CornflowerBlue] (3.1,1) to (3.9,1);
		\draw[edge, dashed][CornflowerBlue] (5.1,1) to (5.9,1);
		\draw[edge, dashed][CornflowerBlue] (7.1,1) to (7.9,1);
		
		\draw[edge][CornflowerBlue] (4,0.9) to[bend left] (2.1,0);
		\draw[edge][CornflowerBlue] (8,0.9) to[bend left] (5,0.9);
		\draw[edge][CornflowerBlue] (6,1.1) to[bend right] (3,1.1);
		\draw[edge][CornflowerBlue] (8.9,2) to[bend right] (7,1.1);
		
		\draw[] (2,0) node {\huge .};
		\draw[] (9,2) node {\huge .};

		\draw (3,1.6) node {$x_1$};
		\draw (4,0.4) node {$y_1$};
		\draw (5,0.4) node {$x_2$};
		\draw (6,1.6) node {$y_2$};
		\draw (7,1.6) node {$x_3$};
		\draw (8,0.4) node {$y_3$};

		\draw[CornflowerBlue] (4.5,2) node {$Q$};
		
		\draw[PastelOrange] (2,2) node {$P$};

	\end{tikzpicture}
	\caption{Two laced directed paths.}
	\label{fig:laced_paths}
\end{figure}

\begin{prop}\label{prop:laced_paths} \cite{hatzel2024generating}*{Lemma 4.3}
	Let $D$ be a directed graph and let $a,b \in V(D)$.
	Further, let $P$ be some directed path and let $Q$ be a directed $a$--$b$~path in $D$.
	Then there exists a directed $a$--$b$~path $Q'$ with $Q' \subseteq P \cup Q$ such that $P$ and $Q'$ are laced.
\end{prop}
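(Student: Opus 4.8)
The plan is induction on $|V(P)|$. If $V(P)\cap V(Q)=\emptyset$, set $Q':=Q$; disjoint paths are laced, so we are done. Assume now $V(P)\cap V(Q)\neq\emptyset$ and let $x$ be the first and $y$ the last vertex of $Q$ (for $\le_Q$) that lies on $P$; then $x\le_Q y$, the initial segment $Qx$ meets $V(P)$ only in $x$, and the final segment $yQ$ meets $V(P)$ only in $y$. Suppose first that $x\le_P y$, and write $xPy$ for the segment of $P$ from $x$ to $y$. Then $Q':=Qx\cup xPy\cup yQ$ is a directed $a$--$b$ path in $P\cup Q$: indeed $Qx$ and $yQ$ meet $V(P)$ only in $x$ and $y$ respectively, $xPy\subseteq V(P)$, and $Qx,yQ$ are segments of $Q$ with $x\le_Q y$. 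Moreover $Q'$ and $P$ are laced, with $\ell=1$, $x_1=x$, $y_1=y$; all five conditions are immediate from the above.

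The remaining case is $y<_P x$, where necessarily $|V(P)|\ge 2$, and here the choice of the first shared segment must be made with care. Let $z$ be the $\le_Q$-last vertex among all $v\in V(P)\cap V(Q)$ with $x\le_P v$; this set contains $x$, so $z$ exists. Let $w$ be the first vertex of $Q$ on $P$ occurring strictly after $z$ on $Q$. Since $y<_P x$ we have $y\neq z$, hence $z<_Q y$, so $w$ exists and $w\le_Q y$; and by the maximality of $z$ we get $w<_P x$. Let $P^-$ be the initial segment of $P$ ending at the $\le_P$-predecessor of $x$, so $V(P^-)=\{\,v\in V(P):v<_P x\,\}\ni w,y$ and $x\notin V(P^-)$, whence $|V(P^-)|<|V(P)|$. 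Applying the induction hypothesis to $P^-$ and the directed $w$--$b$ path $wQ$ (in place of $Q$, with $w,b$ in place of $a,b$), we obtain a directed $w$--$b$ path $Z\subseteq P^-\cup wQ$ laced with $P^-$. Define
\[
  Q':=Qx\cup xPz\cup zQw\cup Z,
\]
where $xPz$ and $zQw$ denote the segments of $P$ from $x$ to $z$ and of $Q$ from $z$ to $w$.

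Everything now hinges on one consequence of the choice of $z$: as $z$ is the last vertex of $Q$ on $P$ lying weakly to the right of $x$ in the $\le_P$-order, the tail $wQ$ never returns to $V(P)$ to the right of $x$, i.e.\ $V(wQ)\cap V(P)\subseteq V(P^-)$, and therefore $V(Z)\cap V(P)=V(Z)\cap V(P^-)$. Using this, one checks that the four pieces of $Q'$ are pairwise disjoint except at the gluing vertices $x,z,w$: $Qx$ meets $V(P)$ only in $x$; every vertex $v$ of $xPz$ satisfies $x\le_P v$, whereas $P^-$ lies strictly to the left of $x$ and the part of $wQ$ off $P$ avoids $V(P)$ altogether; $zQw$ meets $V(P)$ only in $z$ and $w$; and $Qx$, $zQw$, $wQ$ are segments of $Q$ in $\le_Q$-order. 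Hence $Q'$ is a directed $a$--$b$ path in $P\cup Q$. Finally, if $x_1Py_1,\dots,x_kPy_k$ witness the lacedness of $Z$ with $P^-$ (so $x_k=w$ is the first vertex of $Z$ on $P^-$), then $x_1Py_1,\dots,x_kPy_k,\,xPz$ witness the lacedness of $Q'$ with $P$, with $\ell:=k+1$, $x_\ell:=x$, $y_\ell:=z$: the new shared segment $xPz$ lies to the right of all of $P^-$ on $P$ and is traversed by $Q'$ before all of $Z$; $zQw$ is the detour from $y_\ell$ to $x_{\ell-1}=w$, internally disjoint from $P$ because its interior avoids $V(P)$; $Qx$ is the initial segment up to $x_\ell=x$ meeting $P$ only in $x$; the shared segments and detours inherited from $Z$ keep their properties relative to $P$ rather than merely $P^-$ because $V(Z)\cap V(P)\subseteq V(P^-)$; and $y_1Q'=y_1Z$ meets $P$ only in $y_1$ for the same reason.

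I expect the real difficulty to be confined to the case $y<_P x$, and within it to the definition of $z$: the naive attempt of following $P$ forward from $x$ as far as $Q$ permits fails, because the remaining portion of $Q$ could then loop back to the right of $x$ and destroy lacedness. Once $z$ is pinned down as the last visit of $Q$ to $P$ weakly to the right of $x$, the rest — the pairwise disjointness of the four pieces and the verification of the five clauses of the definition for the inherited shared segments together with the single new one — is routine bookkeeping; the disjoint case and the case $x\le_P y$ need no induction.
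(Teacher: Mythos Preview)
Your argument is correct but takes a genuinely different route from the paper. The paper gives a one-line extremal argument: pick any directed $a$--$b$~path $Q'\subseteq P\cup Q$ minimising the number of edges outside $E(P)$; if $Q'$ were not laced with $P$, two consecutive shared components $P_i,P_{i+1}$ along $P$ would appear in the same order along $Q'$, and rerouting through $y_iPx_{i+1}$ would strictly reduce the count, a contradiction. Your proof instead constructs $Q'$ explicitly by induction on $|V(P)|$, isolating one rightmost shared segment $xPz$ and recursing on the strictly shorter initial segment $P^-$. The key point that makes your induction go through is the choice of $z$ as the $\le_Q$-last common vertex with $x\le_P v$, which forces $V(wQ)\cap V(P)\subseteq V(P^-)$ and hence $V(Z)\cap V(P)=V(Z)\cap V(P^-)$; this is exactly what is needed to promote the lacing of $Z$ with $P^-$ to a lacing of $Q'$ with all of $P$. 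The paper's approach is considerably shorter and avoids the careful bookkeeping of disjointness of the four pieces; your approach, on the other hand, is fully constructive and makes the recursive structure of the shared segments explicit. Both yield $Q'\subseteq P\cup Q$.
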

To keep the paper self-contained, we present a short proof of \cref{prop:laced_paths} that follows the lines of the proof given in~\cite{hatzel2024generating}:
\begin{proof}
	Let $Q' \subseteq Q \cup P$ be a directed $a$--$b$~path that minimises the number of edges outside $E(P)$.
	We show that $P$ and $Q'$ are laced. 
	
	If $Q'$ and $P$ are disjoint, then they are laced by definition.
	Otherwise, let $P_1, \dots, P_{\ell}$ be the weakly connected components of $P \cap Q'$ in the order in which they appear on $P$. Note that every such component is a directed path, possibly trivial.
	For every $i \in [\ell]$ let $x_{i}$ be the startvertex of $P_i$ and let $y_i$ be the endvertex of $P_i$.
	Then $x_1 \leq_P y_1 <_P \dots <_P x_\ell \leq_P y_\ell$.
	
	Now, if $Q'$ and $P$ are not laced, then there must be an index $i\in[\ell-1]$ such that $y_i$ appears on $Q'$ before $x_{i+1}$.
	But $P' :=  x_i P y_{i+1}$ is a directed subpath of $P$ that is internally disjoint from $Q'$ and thus the concatenation $Q'':= Q' y_i  P' x_{i+1} Q'$ is a directed $a$--$b$~path such that $Q''$ has fewer edges outside $E(P)$, a contradiction to the choice of $Q'$.
\end{proof}

\begin{obs}\label{obs:double_path}
	Let $D$ be a directed graph and let $a,b \in V(D)$.
	Further, let $P$ be a directed $a$--$b$~path and let $Q$ be a directed $b$--$a$~path such that $P$ and $Q$ are laced.
	Then $P \cup Q$ can be butterfly contracted to a double path starting in $a$ and ending in $b$.
\end{obs}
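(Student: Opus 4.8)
The plan is to exhibit an explicit tree-like model of a suitable double path inside $P\cup Q$, using the characterisation of butterfly minors from~\cref{sec:preliminaries} and reading off the pieces of the model from the decomposition of $P$ and $Q$ that lacedness provides. First I would dispose of the degenerate case: since $a\in V(P)\cap V(Q)$ the paths $P$ and $Q$ are not disjoint, and if moreover $a=b$ then both $P$ and $Q$ are the trivial path $(a)$ and $P\cup Q$ is itself a double path on one vertex; so assume $a\neq b$. Lacedness then yields $x_1,y_1,\dots,x_\ell,y_\ell$ as in the definition, and the first real step is to pin down the boundary: since $a$ is $\leq_P$-minimal but $\leq_Q$-maximal, a short argument forces $x_1=y_1=a$, and symmetrically $x_\ell=y_\ell=b$; in particular $\ell\geq 2$. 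Write $S_i:=x_iPy_i=x_iQy_i$ for the shared segments (so $S_1=\{a\}$ and $S_\ell=\{b\}$), $F_i:=y_iPx_{i+1}$ for the forward detours, and $B_i:=y_{i+1}Qx_i$ for the backward detours. From the remaining lacedness conditions one obtains $V(P)\cap V(Q)=\bigcup_i V(S_i)$, and hence: each $F_i$ meets $Q$ only in its endpoints, each $B_i$ meets $P$ only in its endpoints, the $S_j$ are pairwise vertex-disjoint (their vertex sets are disjoint intervals in the $\leq_P$-order), the $F_i$ are pairwise internally disjoint, the $B_i$ are pairwise internally disjoint, and every $F_i$ is internally disjoint from every $B_j$.

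With this in hand I would take $\bar R:=r_0r_1\cdots r_{\ell-1}$, a path on $\ell$ vertices, and define a tree-like model $\mu$ of $\mathcal{D}(\bar R)$ in $P\cup Q$. For a detour, let $F^\circ$ (resp.\ $B^\circ$) denote that detour with its last vertex deleted. For $1\leq i\leq\ell-2$ set $\mu(r_i):=S_{i+1}\cup F_{i+1}^\circ\cup B_i^\circ$, with in-arborescence $T_{\mathrm{in}}^{\mu(r_i)}:=S_{i+1}$ (a directed path ending in $y_{i+1}$) and out-arborescence $T_{\mathrm{out}}^{\mu(r_i)}:=F_{i+1}^\circ\cup B_i^\circ$ (two directed paths issuing from $y_{i+1}$), which share precisely the common root $y_{i+1}$. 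Set $\mu(r_0):=F_1^\circ$ with in-arborescence $\{a\}$ and out-arborescence $F_1^\circ$, and $\mu(r_{\ell-1}):=B_{\ell-1}^\circ$ with in-arborescence $\{b\}$ and out-arborescence $B_{\ell-1}^\circ$. For the forward edge $(r_i,r_{i+1})$ of $\mathcal{D}(\bar R)$ let $\mu((r_i,r_{i+1}))$ be the last edge of $F_{i+1}$ (the one entering $x_{i+2}$), and for the backward edge $(r_{i+1},r_i)$ let $\mu((r_{i+1},r_i))$ be the last edge of $B_{i+1}$ (the one entering $x_{i+1}$).

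It then remains to verify the three conditions defining a tree-like model. Disjointness of the subgraphs $\mu(r_i)$ follows from the structural facts above, since $\mu(r_i)$ is assembled from a single segment $S_{i+1}$ together with the interiors of $F_{i+1}$ and $B_i$, distinct segments are vertex-disjoint, detour interiors avoid every segment, and the interiors of forward detours lie in $V(P)\setminus V(Q)$ whereas those of backward detours lie in $V(Q)\setminus V(P)$. The arborescence condition holds by construction; the only point worth noting is that a trivial $S_i$ merely makes the corresponding in-arborescence a single vertex, which is harmless. For the incidence condition, the tail of $\mu((r_i,r_{i+1}))$ is a vertex of $F_{i+1}^\circ\subseteq T_{\mathrm{out}}^{\mu(r_i)}$ and its head $x_{i+2}$ lies in $S_{i+2}=T_{\mathrm{in}}^{\mu(r_{i+1})}$ --- with $x_{i+2}=b\in T_{\mathrm{in}}^{\mu(r_{\ell-1})}$ in the boundary case $i+1=\ell-1$ --- and symmetrically the tail of $\mu((r_{i+1},r_i))$ is a vertex of $B_{i+1}^\circ\subseteq T_{\mathrm{out}}^{\mu(r_{i+1})}$ while its head $x_{i+1}$ lies in $S_{i+1}=T_{\mathrm{in}}^{\mu(r_i)}$, with $x_{i+1}=a\in T_{\mathrm{in}}^{\mu(r_0)}$ when $i=0$. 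Thus $\mathcal{D}(\bar R)$ is a butterfly minor of $P\cup Q$; since the common roots of $\mu(r_0)$ and $\mu(r_{\ell-1})$ are $a$ and $b$, under the identification convention of~\cref{sec:preliminaries} this double path starts in $a$ and ends in $b$.

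I expect the main obstacle to be purely organisational: getting the boundary behaviour right --- that $S_1$ and $S_\ell$ are exactly the singletons $\{a\}$ and $\{b\}$, so that the two end pieces of the model are the truncated detours $F_1^\circ$ and $B_{\ell-1}^\circ$ --- together with a careful disjointness check that also covers the cases in which some $S_i$, $F_i$, or $B_i$ is trivial. An essentially equivalent alternative, which one could present instead, is to contract directly: every internal vertex of an $F_i$, of a $B_i$, or of an $S_i$ with $2\leq i\leq\ell-1$ has in-degree $1$ or out-degree $1$ in $P\cup Q$, so an incident edge is butterfly contractible; contracting all such edges turns $P\cup Q$ into the digraph on $\{a,b\}\cup\{x_i,y_i:2\leq i\leq\ell-1\}$ with edges $(x_i,y_i)$ for $2\leq i\leq\ell-1$ and $(y_i,x_{i+1})$, $(y_{i+1},x_i)$ for $1\leq i\leq\ell-1$, in which $(x_i,y_i)$ is the unique outgoing edge of $x_i$, so contracting each of these in turn produces exactly $\mathcal{D}(\bar R)$.
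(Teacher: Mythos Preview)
Your argument is correct. In the paper this statement is recorded as an \emph{observation} and is not given a proof; the only accompanying text is the subsequent remark that $P\cup Q$ decomposes into a chain of directed cycles $(C_j)_{j\in[p]}$ with $C_j\cap C_k$ a directed path when $|j-k|=1$ and empty otherwise. Your write-up supplies precisely the details the paper omits: the identification $x_1=y_1=a$, $x_\ell=y_\ell=b$, the partition of $P\cup Q$ into the shared segments $S_i$ and the detours $F_i$, $B_i$, and then either an explicit spanning tree-like model of $\mathcal{D}(\bar R)$ or the equivalent direct sequence of butterfly contractions. Since your model uses every edge of $P\cup Q$, it indeed witnesses that the double path is obtained by contractions alone, matching the phrasing ``can be butterfly contracted''. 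There is nothing to correct; if anything, the direct-contraction version you sketch at the end is the more economical presentation and is closest in spirit to what the paper's remark about the chain of cycles is hinting at.
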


We remark that in the setting of \cref{obs:double_path} there exist directed cycles $(C_j)_{j \in [p]}$ with the property that for every $j \neq k \in [p]$ $C_j\cap C_k$ is a directed path if $|j-k|=1$ and empty otherwise, and $\bigcup_{i \in [p]} C_i = P \cup Q$.

\section{Constructing the centre} \label{sec:centre}
In this section we establish the foundation for the proof of~\cref{finite}.
Given a set of vertices $U$ in a strongly connected directed graph, we prove the existence of a strongly connected butterfly minor consisting of a family of disjoint double paths that have an endpoint in $U$ and are `attached' to a strongly connected directed graph of certain type:

\begin{restatable}{lem}{propgroundwork}\label{propgroundwork}
	There exists a function $\alpha$ such that for every $n \in \NN$ and every set $U$ of vertices of a directed graph $D$ with $|U| \geq \alpha(n)$ there is a butterfly minor that consists either
	\begin{enumerate}[label=(\roman*)]
		\item\label{itm:centre_4} 
		of a directed cycle containing $n$ elements of $U$ or of a family of directed cycles $(C_j)_{j \in [p]}$ with the property that for every $j \neq k$ $C_j\cap C_k$ is a directed path if $|j-k|=1$ and there is an $n$-element subset $U' \subseteq \bigcup_{j \in [p]} V(C_j) \cap U$ such that each directed cycle of $(C_j)_{j \in [p]}$ contains at most one element of $U'$, or
		\item of a family of disjoint double paths $(F_i: i \in [n])$ where $F_i$ has endpoints $u_i \in U$ and $z_i$, edges $((a_i, z_i): i \in [n])$, $((z_i, b_i): i \in [n])$ and a directed graph $A$ with $A \cap \bigcup_{i \in [n]} F_i = \emptyset$ such that
		\begin{enumerate}[label=(ii--\alph*)]
			\item\label{itm:centre_1} $A$ is a single vertex with $A=\{a_i\}=\{b_i\}$ for every $i \in [n]$,
			\item\label{itm:centre_2} $A$ is a directed cycle containing $a_i, b_i$ for every $i \in [n]$ and the elements of $(a_i: i \in [n])$ are distinct, or
			\item\label{itm:centre_3} $A$ consists of a sequence of directed cycles $(C_j)_{j \in [p]}$ such that for every $j \neq k$ $C_j\cap C_k$ is a directed path if $|j-k|=1$, and empty otherwise, $a_i, b_i \in V(\bigcup_{j \in [p]} C_j)$ for every $i \in [n]$ and each directed cycle $(C_j)_{j \in [p]}$ contains at most one element of $(a_i: i \in [n])$.
		\end{enumerate}
	\end{enumerate}
\end{restatable}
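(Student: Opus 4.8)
The plan is to reduce the statement to a dichotomy for rooted trees and then to assemble the desired butterfly minor out of laced paths, using \cref{prop:laced_paths} and \cref{obs:double_path} as the only real tools. I would begin by fixing a vertex $r\in V(D)$ and, using strong connectivity, a spanning out-arborescence $B$ of $D$ rooted at $r$; let $T\subseteq B$ be the minimal sub-out-arborescence containing $U$, so that every leaf of $T$ lies in $U$ and $T$ still contains all $m$ vertices of $U$. Since $m\gg n$, an application of \cref{ramsey1} to the branching pattern of $T$ relative to $U$ (either some vertex of $T$ has many children whose subtrees each meet $U$, or the $U$-branching structure of $T$ is path-like, as in the tree case of the classical star--comb argument) yields, after passing to a still enormous subset of $U$, one of two configurations. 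In the \emph{hub} configuration there are a vertex $c\in V(T)$ and directed $c$--$u_i$~paths $P_1,\dots,P_N$ with $N\gg n$, ending in distinct $u_i\in U$ and pairwise meeting only in $c$. In the \emph{spine} configuration there are a directed path $P\subseteq T$ and pairwise disjoint, possibly trivial, directed paths $P_1,\dots,P_N$ meeting $P$ exactly in their first vertices $w_1<_P\dots<_P w_N$ and ending in distinct $u_1,\dots,u_N\in U$.

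In the spine configuration, strong connectivity gives a directed path from the last vertex of $P$ back to its first; by \cref{prop:laced_paths} we may take it laced with $P$, and then by \cref{obs:double_path} and the remark after it the union $A_0$ of $P$ with this path is a chain of directed cycles covering $V(P)$. If every $P_i$ is trivial, then $A_0$ already carries $n$ vertices of $U$ once all but $n$ of the $u_i$ are discarded, which is outcome~\labelcref{itm:centre_4}. Otherwise, for each $i$ I would route a directed $A_0$--$u_i$~path and a directed $u_i$--$A_0$~path, assume by \cref{prop:laced_paths} that they are laced, and butterfly contract via \cref{obs:double_path}; the point is that the two routes agree, after lacing, from some vertex $z_i$ onwards towards $u_i$, so that the shared part contracts to a double path $F_i$ from $z_i$ to $u_i$ while the remaining initial and final segments contract to the edges $(a_i,z_i)$ and $(z_i,b_i)$. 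A final application of \cref{ramsey1} to the pairs $\{u_i,u_j\}$ then lets us assume that the cycles of $A_0$ carrying the attachment vertices $a_i$ are pairwise non-consecutive, so that each cycle of $A:=A_0$ contains at most one $a_i$---outcome~\labelcref{itm:centre_3}.

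In the hub configuration, for each $i$ I would route a directed $u_i$--$c$~path, lace it with $P_i$ via \cref{prop:laced_paths}, and butterfly contract via \cref{obs:double_path} to obtain a double path from $c$ to $u_i$; splitting off $c$ together with its two incident edges leaves a double path $F_i$ from the neighbour $z_i$ of $c$ to $u_i$, and taking $a_i=b_i=c$ gives outcome~\labelcref{itm:centre_1} as soon as these structures can be chosen pairwise disjoint apart from $c$. In general the return paths interfere with one another and with the $P_j$, and the structure of the strong components of $D-c$ and of their adjacencies with $c$ decides the outcome: if all return routes can be funnelled through a single directed cycle through $c$, that cycle becomes $A$ and we land in outcome~\labelcref{itm:centre_2} (now $a_i$ and $b_i$ may differ, being the points where the out- and in-routes meet the cycle); otherwise we are pushed down to a chain of cycles as centre, which is again outcome~\labelcref{itm:centre_3}. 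At each stage \cref{ramsey1} keeps the number of usable $u_i$ above $n$.

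The step I expect to be the main obstacle is this last part of the hub case (and, to a lesser extent, of the spine case): controlling all pairwise intersections of the double paths $F_i$, and of the return paths with the portion of the minor already built, and proving that the collisions that cannot be routed away can be absorbed into $A$ without ever leaving the list of three permitted shapes---a single vertex, a directed cycle, or a chain of cycles. Keeping the quantitative bookkeeping under control, so that each of the finitely many collision patterns costs only a Ramsey-controlled factor and the hypothesis $m\gg n$ genuinely suffices, is where the bulk of the work is expected to lie.
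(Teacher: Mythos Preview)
Your skeleton---grow an out-arborescence, split into a hub and a spine configuration, lace a return path to build a chain of cycles in the spine case---matches the paper's \cref{propmainstructure} closely. Where your proposal falls short is precisely the step you flag as the main obstacle: you have no mechanism for making the double paths $F_i$ pairwise disjoint, and the vague plan of ``absorbing collisions into $A$'' is not a proof. The paper solves this with a single idea you are missing: at the very start, replace $D$ by an \emph{edge-minimal} strongly connected subgraph still containing $U$. Edge-minimality then forces, via \cref{lem:minimal_property}, that the first edge $e_i$ of each outgoing path $P_i$ is essential in the sense that no directed $A$--$\tilde P_i$ walk exists in $D-e_i$ (this is Claim~\ref{clm:notAvoidingFirstEdge}, obtained by rerouting $P_i$ if it fails). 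Once that holds, any return path $Q_i$ laced with $P_i$ must avoid $\tilde P_j\cup Q_jz_j$ for $j\neq i$, since otherwise $P_i\cup Q_i\cup\tilde P_j\cup Q_jz_j$ would bypass $e_j$ (Claim~\ref{clm:Pavoids}). This yields the required disjointness of the $F_i$ for free; no Ramsey-type collision analysis is needed.

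Two smaller points. First, in your spine case you only extract outcome~\labelcref{itm:centre_3}, but the dichotomy ``either some cycle of $A_0$ contains $n$ of the $a_i$, or there is a size-$n$ subset with at most one $a_i$ per cycle'' is what produces~\labelcref{itm:centre_2} as well; this is just pigeonhole, not Ramsey. Second, your hub case should lead cleanly to~\labelcref{itm:centre_1} once disjointness is in hand; the speculation about strong components of $D-c$ funnelling return routes through a cycle is not how~\labelcref{itm:centre_2} arises in the paper and would be hard to make precise.
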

\noindent
For proving~\cref{finite} we will apply \cref{propgroundwork}, and if the butterfly minor is of types~\labelcref{itm:centre_2} or \labelcref{itm:centre_3}, we analyse how the vertices $(a_i: i \in [n])$ and $(b_i: i \in [n])$ are distributed in the directed cycle or the union of directed cycles.

As a first step towards the proof of~\cref{propgroundwork} we show the existence of one of the following subgraphs:

\begin{prop}\label{propmainstructure}
	There exists a function $\beta$ such that for every $n \in \NN$ and every set $U$ of vertices of a directed graph $D$ with $|U| \geq \beta(n)$ there is a strongly-connected subgraph $A$ of $D$ such that:
	\begin{enumerate}[label=(\alph*)]
		\item\label{itm:main_structure_1} $A$ is a single vertex and there is a family $(P_i: i \in [n])$ of non-trivial directed $A$--$U$~paths that intersect only in $A$, or
		\item\label{itm:main_structure_2} $A$ is a directed cycle and there is a family $(P_i: i \in [n])$ of (possibly trivial) disjoint directed $A$--$U$~paths, or
		\item\label{itm:main_structure_3} $A$ is the union of a sequence of directed cycles $(C_j)_{j \in [p]}$ with the property that for every $j \neq k$ $C_j\cap C_k$ is a directed path if $|j-k|=1$ and empty otherwise; there is a family $(P_i: i \in [n])$ of (possibly trivial) disjoint directed $A$--$U$~paths such that every directed cycle in $(C_j)_{j \in [p]}$ contains at most one startvertex of the directed paths $(P_i: i \in [n])$.
	\end{enumerate}
\end{prop}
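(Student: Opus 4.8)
The plan is to build the subgraph $A$ together with the path system $(P_i)_{i\in[n]}$ greedily, working inside a fixed strongly connected host, and to use Ramsey's theorem (\cref{ramsey1}) to clean up the combinatorial structure at the end. First I would pick an arbitrary vertex $r\in V(D)$. Since $D$ is strongly connected, for every $u\in U$ there is a directed $r$--$U$ path; more usefully, I would repeatedly extract directed paths from the current structure to (unused) vertices of $U$. The natural first move is to start with $A^{(0)}=\{r\}$ and, as long as fewer than $n$ vertices of $U$ have been reached, take a directed path $P$ from the current strongly connected structure $A^{(t)}$ to an unused vertex of $U$, and a directed path back from that vertex (or from the endpoint of $P$) into $A^{(t)}$, using \cref{prop:laced_paths} to make the return path laced with what we already have. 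By \cref{obs:double_path} a laced forward/back pair butterfly-contracts to a double path, and the "remark" after \cref{obs:double_path} tells us that $P\cup Q$ decomposes into a chain of directed $3$-cycles meeting consecutively in directed paths — exactly the shape appearing in alternatives \ref{itm:main_structure_2} and \ref{itm:main_structure_3}. So after each step the strongly connected core $A^{(t)}$ grows into a longer chain of directed cycles, and the leftover "stub" of $P$ hanging off it becomes one of the directed $A$--$U$ paths $P_i$.

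The subtlety is controlling how the new return paths attach to the old core. There are two regimes. If infinitely many (i.e. $\gg$-many) of the return paths can be routed so that they re-enter $A^{(t)}$ at a single common vertex — or more precisely so that the whole construction collapses, under butterfly contraction, onto one vertex with $n$ internally-disjoint paths leaving it to $U$ — then we are in case \ref{itm:main_structure_1}: $A$ is a single vertex. Otherwise the attachment points genuinely spread out along a chain of directed cycles, and we want to guarantee that (after pruning) each cycle in the chain carries at most one path-startvertex; discarding the "extra" paths and keeping one representative per cycle, while still retaining $n$ of them, is where $m\gg n$ is spent. If in addition the chain turns out to be a single directed cycle (equivalently $p=1$ after pruning, or we can contract the chain down to one cycle without losing the disjoint path structure), we land in \ref{itm:main_structure_2}; the general chain is \ref{itm:main_structure_3}. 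I would phrase the dichotomy as: either the "$A$-part" of the model can be butterfly-contracted to a single vertex (case \ref{itm:main_structure_1}), or it is honestly a chain of $\geq 2$ directed cycles of the stated overlap pattern, and then a Ramsey/pigeonhole argument on the cycle-indices of the $n$ chosen teeth yields \ref{itm:main_structure_3}, with \ref{itm:main_structure_2} as the $p=1$ special case.

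Concretely, the key steps in order are: (1) fix $r$, set $A^{(0)}=\{r\}$; (2) iteratively, given $A^{(t)}$ strongly connected and $t<$ (some large bound $N\gg n$) vertices of $U$ used, pick a shortest directed path from a vertex of $A^{(t)}$ to a fresh $u\in U$ and a shortest directed path from $u$ back to $A^{(t)}$, laced with $A^{(t)}$ by \cref{prop:laced_paths}; record the forward stub as a candidate $A$--$U$ path and update $A^{(t+1)}:=A^{(t)}\cup(\text{laced back-path})$, which by the remark after \cref{obs:double_path} is again a chain of directed cycles with the required pairwise-intersection pattern; (3) after $N$ steps, pass to a butterfly minor and distinguish: if the cyclic part contracts to one vertex, take \ref{itm:main_structure_1}; (4) otherwise apply pigeonhole/Ramsey to the sequence of cycle-indices hosting the $N$ candidate startvertices to keep $n$ of them landing on pairwise distinct cycles (case \ref{itm:main_structure_3}), unless the whole cyclic part is a single cycle, which is \ref{itm:main_structure_2}; (5) finally, prune to exactly $n$ teeth and check disjointness of the retained $P_i$, re-routing through the model's arborescences as in the discussion preceding \cref{propconnectedness} so that the endpoints genuinely lie in $U\subseteq V(D)$.

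The main obstacle I anticipate is step (2)–(3): ensuring that after attaching many laced return paths the strongly connected core really does stay in the narrow "chain of directed cycles meeting consecutively in directed paths" family, rather than degenerating into something more tangled, and simultaneously ensuring the forward stubs remain pairwise disjoint and disjoint from the core. This likely requires choosing the paths minimally (shortest, or minimising edges outside the current structure, in the spirit of \cref{prop:laced_paths}'s proof) and a careful case analysis of how a new laced path can intersect an existing cycle-chain — together with the bookkeeping needed to translate everything faithfully through a tree-like model so that the identification of $V(H)$ with vertices of $D$ behaves well.
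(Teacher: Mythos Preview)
Your iterative scheme has a genuine gap exactly where you flag it: there is no reason that adjoining a new laced return path to a chain of directed cycles yields again a chain of directed cycles with the required consecutive-intersection pattern. The remark after \cref{obs:double_path} only says that a single laced pair $P\cup Q$ decomposes this way; it says nothing about lacing a new path against an existing chain of cycles. After a few iterations the core $A^{(t)}$ can branch, and your forward stubs need not stay disjoint from later additions. You also slip into talking about butterfly-contracting the core to a single vertex to reach case~\ref{itm:main_structure_1}, but the proposition asks for an actual subgraph $A$ of $D$, so case~\ref{itm:main_structure_1} must be witnessed by a genuine vertex of $D$ with $n$ internally disjoint out-paths, not by a contracted blob.

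The paper avoids all of this by working inside a single out-arborescence $T$ spanning $U$. If $T$ has a vertex of out-degree at least $n$ you get \ref{itm:main_structure_1} immediately. Otherwise $T$ has bounded out-degree, so the undirected star--comb lemma~\eqref{Fact:finite_undirected} applied to the underlying tree produces a comb with many teeth in $U$; this yields a single directed path $R\subseteq T$ together with many pairwise disjoint directed $R$--$U$ paths inside $T$ (disjointness is free because $T$ is a tree). Now lace \emph{once}: take a directed path $S$ from the end of $R$ back to its start, laced with $R$ via \cref{prop:laced_paths}, so that $R\cup S$ is a chain of cycles $(C_j)_{j\in[p]}$. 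Finally a pigeonhole on which $C_j$ each path starts in gives \ref{itm:main_structure_2} or \ref{itm:main_structure_3}. The two ideas you are missing are (i) build the disjoint $A$--$U$ paths inside an out-arborescence first, and (ii) invoke the undirected result~\eqref{Fact:finite_undirected} on that tree to force either a high-degree vertex or a long spine; both eliminate the need for any iteration.
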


See \cref{fig:type} for an illustration of such subgraphs.

\begin{figure}[ht]
	\begin{tikzpicture}
		
		\foreach \a in {1,2,...,4}{
			
			\draw[edge]  ({0.1*cos(\a*360/4)},{0.1*sin(\a*360/4)}) to ({1.9*cos(\a*360/4)},{1.9*sin(\a*360/4)});
			\draw[LavenderMagenta] ({2*cos(\a*360/4)},{2*sin(\a*360/4)}) node {\huge .};
			\draw ({2.3*cos((\a+1)*360/4)},{2.3*sin((\a+1)*360/4)}) node {$u_\a$};
		}
		\draw[CornflowerBlue] (0,0) node {\huge.};
		
		\foreach \a in {1,2,...,4}{
			
			\draw[edge]  ({6+1.1*cos(\a*360/4)},{1.1*sin(\a*360/4)}) to ({6+1.9*cos(\a*360/4)},{1.9*sin(\a*360/4)});
			\draw[edge][CornflowerBlue]  ({6+1*cos(\a*360/4+5)},{1*sin(\a*360/4+5)}) to[] ({6+1*cos((\a+1)*360/4-5)},{1*sin((\a+1)*360/4-5)});
			
			\draw ({6+2.3*cos((\a+1)*360/4)},{2.3*sin((\a+1)*360/4)}) node {$u_\a$};
			
			\draw[CornflowerBlue]  ({6+1*cos(\a*360/4)},{1*sin(\a*360/4)}) node {\huge.};
			\draw[LavenderMagenta] ({6+ 2*cos(\a*360/4)},{2*sin(\a*360/4)}) node {\huge .};
		}
		\draw[CornflowerBlue] (6,0) node {$A$};
		\draw[CornflowerBlue] (0.5,0.5) node {$A$};

	\end{tikzpicture}
	\begin{tikzpicture}
		
		\foreach \a in {1,5, 9}{
			\draw[edge][CornflowerBlue] ({\a+0.1},0) to ({\a+1.9},0);
			\draw[edge][PastelOrange] ({\a+2},0.1) to ({\a+2},1.9);
			\draw[edge][CornflowerBlue, dashed] ({\a+2},0.1) to ({\a+2},1.9);	
			\draw[edge][CornflowerBlue] ({\a+1.9},2) to ({\a+0.1},2);
			\draw[edge][CornflowerBlue] (\a,1.9) to ({\a},0.1);
			\draw[edge][PastelOrange, dashed] ({\a},1.9) to ({\a},0.1);	
			
			\draw[edge][PastelOrange] ({\a+3.9},0) to ({\a+2.1},0);
			\draw[edge][PastelOrange] ({\a+2.1},2) to ({\a+3.9},2);
		}
		
		\draw[edge][CornflowerBlue] (1,1.9) to (1,0.1);
		\draw[edge][PastelOrange] (13,1.9) to ({13},0.1);
		
		\draw[edge][] (1,2.1) to ({1},2.9);
		\draw[] ({1},2) node {\huge.};
		
		\draw[white, fill=white] (4,2) circle (0.1);
		\draw[white, fill=white] (7,1) circle (0.1);
		
		\draw[edge][] (4,2.1) to ({4},2.9);
		\draw[] ({4},2) node {\huge.};
		\draw[edge][] (6.9,1) to ({6.1},1);
		\draw[] ({7},1) node {\huge.};
		\draw[edge][] (11.1,0.1) to ({11.9},0.9);
		\draw[] ({11},0) node {\huge.};
		
		\draw[edge][PastelOrange] (3,2) to (3.9,2);
		\draw[edge][CornflowerBlue] (7,0.8) to (7,0.9);
		
		\draw[LavenderMagenta] ({1},3) node {\huge.};
		\draw ({1.3},3) node {$u_1$};
		\draw[LavenderMagenta]({4},3) node {\huge.};
		\draw({4.3},3) node {$u_2$};
		\draw[LavenderMagenta] ({6},1) node {\huge.};
		\draw ({5.7},1) node {$u_3$};
		\draw[LavenderMagenta] ({12},1) node {\huge.};
		\draw ({12.3},1) node {$u_4$};
		
		\foreach \a in {1, 3, 5}{
			\draw[CornflowerBlue] ({2*\a},0.5) node {$C_\a$};
			\draw[] ({2*\a-1},0) node {\huge.};
			\draw[] ({2*\a-1},2) node {\huge.};
		}
		
		\foreach \a in {2, 4, 6}{
			\draw[PastelOrange] ({2*\a},0.5) node {$C_\a$};
			\draw[] ({2*\a-1},0) node {\huge.};
			\draw[] ({2*\a-1},2) node {\huge.};
		}
		
		\draw[] (13,0) node {\huge.};
		\draw[] (13,2) node {\huge.};

	\end{tikzpicture}
	\caption{Directed graphs of types~\labelcref{itm:main_structure_1,itm:main_structure_2,itm:main_structure_3} in~\cref{propmainstructure}.}	
	\label{fig:type}
\end{figure}
\begin{proof}
	Let $n \in \NN$ be arbitrary, and fix $\beta(n), m \in \NN$ such that $\beta(n) \gg m \gg n$.
	Further, let $U$ be an arbitrary set of vertices of a strongly connected directed graph $D$ with $|U| \geq \beta(n)$.
	Let $T$ be an out-arborescences in $D$ that contains all vertices of $U$ and whose leaves are contained in $U$.
	Such an out-arborescence can be constructed by depth-first search since $D$ is strongly-connected.
	
	If $T$ contains a vertex $x$ of out-degree at least $n$, set $A:=\{x\}$ and let $(P_i: i \in [n])$ be a family of maximal directed paths in $T$ starting in $x$, which witness a subgraph of type~\labelcref{itm:main_structure_1}.
	
	Thus we can assume that $T$ contains only vertices of out-degree at most $n - 1$.
	Applying \labelcref{Fact:finite_undirected} to the underlying undirected graph of $T$ provides a comb with $4m$ teeth in $U$.
	Note that the back of this comb splits into two directed paths.
	This implies that there exists a directed path $R$ in $T$ that contains either $m$ elements of $U$ or $m$ vertices of out-degree at least $2$ in $T$.
	These $m$ elements of $U$ or the $m$ vertices of out-degree at least $2$ witness that there exists a family $(\hat{P}_i: i \in [m])$ of (possibly trivial) disjoint directed $R$--$U$~paths in $T$.
	
	Let $S$ be a directed path starting in the endvertex of $R$ and ending in the startvertex of $R$ that is laced with $R$, which is possible since $D$ is strongly connected and by \cref{prop:laced_paths}.
	Then $R \cup S= \bigcup_{i \in [p]} C_i$ for a sequence $(C_i)_{i \in [p]}$ of directed cycles with the property that for every $j \neq k$ $C_j\cap C_k$ is a directed path if $|j-k|=1$ and empty otherwise.
	Each directed path $\hat{P}_i$ contains a terminal segment $P_i$ that is a directed $( \bigcup_{i \in [p]} C_i)$--$U$~path.
	
	If there exists $j \in [p]$ such that at least $n$ elements of $(P_i: i \in [m])$ start in the directed cycle $C_j$, then the union of $C_j$ and these $n$ elements of $(P_i: i \in [m])$ is the desired subgraph of type~\labelcref{itm:main_structure_2}.
	Otherwise, by the choice of $m$, there exist $3n$ directed cycles in $(C_j: j \in [p])$ in which a directed path of $(P_i: i \in [m])$ starts. Therefore, we can pick a family $\mathcal{C}$ of $n$ directed cycles in $(C_j: j \in [p])$ in which a directed path of $(P_i: i \in [m])$ starts such that the indices of the directed cycles in $\mathcal{C}$ differ pairwise by at least $3$. Then the union of $\bigcup_{j \in [p]} C_j$ and one directed path of $(P_i: i \in [m])$ starting in $C$ for each element $C \in \mathcal{C}$ is a subgraph of type~\labelcref{itm:main_structure_3}.
\end{proof}

\begin{prop}\label{lem:minimal_property}
	Let $U$ be a set of vertices of a directed graph $D$ such that $D$ is edge-minimal under the property of being strongly connected and containing $U$.
	Further, let $A$ be a strongly-connected subgraph of $D$.
	Then for every edge $e$ with head in $V(D) \setminus V(A)$ and tail in $V(A)$ there exists $v \in U$ for which there is no directed $A$--$v$~path in $D - e$.
\end{prop}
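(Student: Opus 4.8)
The plan is to argue by contradiction. Fix the edge $e=(a,b)$ with $a\in V(A)$ and $b\in V(D)\setminus V(A)$, and suppose that, contrary to the claim, for \emph{every} $v\in U$ there is still a directed $A$--$v$~path in $D-e$. The goal is then to exhibit a subgraph of $D$ with strictly fewer edges that is still strongly connected and still contains $U$, which contradicts edge-minimality.

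First I would record two easy observations. Since $b\notin V(A)$, the edge $e$ is not an edge of $A$, so $A\subseteq D-e$ and $A$ is still strongly connected inside $D-e$. Second, deleting $e$ cannot destroy reachability \emph{towards} $A$: for any $w\in V(D)$ there is a directed $w$--$a$~path in $D$ (strong connectivity), and such a path cannot traverse $e$, because $e$ is an out-edge of its terminal vertex $a$ and the vertices on a path are distinct. Hence in $D-e$ every vertex still has a directed path to $a\in V(A)$.

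Now let $R\subseteq V(D)$ be the set of vertices reachable from $V(A)$ in $D-e$. By the contradiction hypothesis $U\cup V(A)\subseteq R$, and by construction $R$ is closed under taking out-neighbours in $D-e$. The central claim is that $D[R]$ is strongly connected. For $r\in R$, a directed $V(A)$--$r$~path in $D-e$ stays inside $R$ by out-closedness, hence lies in $D[R]$; and a directed $r$--$a$~path in $D$ lies in $D-e$ by the previous paragraph, starts in $R$, so again by out-closedness it stays in $R$ and lies in $D[R]$. Combining these with the strong connectivity of $A\subseteq D[R]$ shows that any two vertices of $R$ are mutually reachable within $D[R]$. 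To finish, I would split on whether $R=V(D)$: if $R\subsetneq V(D)$, then $D[R]$ is a proper subgraph with strictly fewer edges (a vertex outside $R$ has an in-edge that is lost; the cases $|V(D)|\le 1$ are vacuous as then no edge $e$ as in the statement exists), it is strongly connected by the claim, and it contains $U$ — the desired contradiction; if $R=V(D)$, then exactly the same argument, carried out in $D-e$ instead of in $D[R]=D$ (which is legitimate since $A\subseteq D-e$, the $V(A)$--$r$~paths live in $D-e$ by definition of $R$, and the $r$--$a$~paths live in $D-e$ by the previous paragraph), shows that $D-e$ itself is strongly connected, and $D-e$ still contains $U$ since it has the same vertex set — again contradicting edge-minimality.

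The one step that needs genuine care, and that I expect to be the crux, is the strong-connectedness claim for $D[R]$ (equivalently for $D-e$ when $R=V(D)$): the witnessing paths must be shown never to leave $R$, and this is precisely where one must simultaneously use the out-closedness of $R$ in $D-e$ and the observation that directed $w$--$a$~paths avoid $e$. Everything else is bookkeeping around the definition of edge-minimality.
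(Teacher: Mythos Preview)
Your proof is correct. Both arguments hinge on the same observation: any directed path terminating at $a$ cannot use $e=(a,b)$, so reachability \emph{into} $A$ survives in $D-e$. The organisation, however, differs. The paper works forward: edge-minimality immediately yields a pair $u,v\in U$ with no $u$--$v$ path in $D-e$; then a $u$--$A$ path (which avoids $e$) together with the strong connectivity of $A$ shows that any putative $A$--$v$ path in $D-e$ would complete a $u$--$v$ path, so this particular $v$ witnesses the conclusion. You instead negate the conclusion globally, build the out-closed reachability set $R\supseteq U\cup V(A)$, and show that $(D-e)[R]$ is strongly connected, thereby producing a strictly smaller strongly connected subgraph containing $U$. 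The paper's route is shorter and names the witness $v$ explicitly; your route establishes a little more (strong connectivity of a concrete subgraph) at the cost of the case distinction --- which, incidentally, you could collapse by always taking $(D-e)[R]$ as the smaller subgraph, since $e$ is absent from it regardless of whether $R=V(D)$.
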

\begin{proof}
	By edge-minimality of $D$, there exist $u \neq v \in U$ such that every directed $u$--$v$~path in $D$ contains $e$.
	In particular, there exists a directed $u$--$A$~path $Q$ in $D-e$.
	
	Suppose for a contradiction that there exist a directed $A$--$v$~path $P$ in $D - e$.
	Since $A$ is strongly-connected there exists a directed $u$--$v$~path in $Q \cup A \cup P \subseteq D - e$, contradicting the choice of $u$ and $v$.
\end{proof}

\begin{proof}[Proof of \cref{propgroundwork}]
	Let $n \in \NN$ be arbitrary.
	We set $\alpha(n):= \beta(2n)$, where $\beta$ is the function from~\cref{propmainstructure}.
	Let $U$ be an arbitrary set of vertices of a strongly connected directed graph $D$ with $|U| \geq \alpha(n)$.
	We assume that $D$ is edge-minimal with the property that $U$ is strongly-connected and
	apply \cref{propmainstructure} to $U$ to obtain a strongly connected subgraph $A$ and a family $(P_i: i \in [2n])$ of directed paths as described in~\cref{propmainstructure}.
	
	If $n$ directed paths of $(P_i: i \in [2n])$ are trivial, then $A$ is of type~\labelcref{itm:main_structure_2} or~\labelcref{itm:main_structure_3}, i.e.\ $A$ is either a directed cycle or a union of directed cycles $(C_j)_{j \in [p]}$ with the property that for every $j \neq k$ $C_j\cap C_k$ is a directed path if $|j-k|=1$ and empty otherwise, and thus $A$ is the desired butterfly minor of type~\labelcref{itm:centre_4}.
	Otherwise, up to relabelling, we can assume that the directed paths $(P_i: i \in [n])$ are non-trivial.
	Let $e_i$ be the first edge of $P_i$ and let $\Tilde{P}_i$ be the terminal segment of $P_i$ that starts in the second vertex of $P_i$ for every $i \in [n]$.
	
	\begin{claim}\label{clm:notAvoidingFirstEdge}
		We can assume that every directed path ${P}_i$ for $i \in [n]$ has the property that there is no directed $A$--$\Tilde{P}_i$~path in $D-e_i$.
	\end{claim}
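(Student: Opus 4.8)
The plan is to prove \cref{clm:notAvoidingFirstEdge} by a rerouting argument applied to the family $(P_i)_{i\in[m'']}$: if some $P_i$ violates the asserted property, we replace it by another admissible path, and we organise the argument so that this happens only finitely often.

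The starting point is an observation that severely constrains any violating path. Write $y_i$ for the head of $e_i$, that is, the second vertex of $P_i$ and the startvertex of $\tilde P_i$. We may assume $|E(P_i)|\ge 2$: if $|E(P_i)|=1$ then $y_i$ is the only vertex of $P_i$ outside $V(A)$, $V(\tilde P_i)=\{y_i\}$, and the property asserted for $P_i$ is exactly that there is no directed $A$--$y_i$~path in $D-e_i$. In fact there is no directed $A$--$y_i$~path in $D-e_i$ regardless: otherwise, since $A$ is strongly connected and $y_i\notin V(A)$ (as $|E(P_i)|\ge 2$), prepending a suitable path inside $A$ produces a directed $x_1^i$--$y_i$~path $Q$ in $D-e_i$, where $x_1^i$ is the tail of $e_i$; but then every directed $s$--$t$~path with $s,t\in U$ that uses $e_i$ can be rerouted through $Q$, so $U$ remains strongly connected in $D-e_i$, contradicting the edge-minimality of $D$. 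Consequently, if $P_i$ does admit a directed $A$--$\tilde P_i$~path $R$ in $D-e_i$, then $R$ avoids $y_i$; and, after replacing $R$ by a subpath, we may assume that $R$ meets $V(A)$ only in its startvertex and $V(\tilde P_i)$ only in its endvertex $z$, whence $y_i<_{P_i}z$.

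Given such an $R$, the move is to replace $P_i$ by a directed $A$--$U$~path contained in $R\cup zP_i$. If $R$ is chosen so as to first re-enter $\bigcup_{j\in[m'']}V(P_j)$ at $z$, then the new path is internally disjoint from every $P_j$ and so meets the unaltered paths only possibly inside $A$; one then checks that it is non-trivial, that its first edge is not $e_i$, and that, together with the remaining $P_j$, it again forms a family of the same type provided for $A$ by \cref{propmainstructure}. The cases where $R$ would instead first re-enter the configuration on some $P_j$ with $j\ne i$, or re-enter $V(A)$ at a vertex already used as the startvertex of another path (which can occur only in types~\labelcref{itm:main_structure_2} and~\labelcref{itm:main_structure_3}), have to be handled separately, either by rerouting within the directed cycles making up $A$ or by discarding a bounded number of paths, which is harmless since $m''\gg n$. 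To see that the replacements terminate, one runs the whole argument for a family chosen to be minimal with respect to a suitably designed well-founded measure, so that a further replacement would contradict minimality.

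I expect the main obstacle to be precisely this combination of demands: pinning down a measure that genuinely drops under the replacement while the new family stays pairwise disjoint and of the prescribed type. The awkward configurations are those in which the rerouting path $R$ is long and threads through the other paths, so that the naive replacement neither shortens the configuration nor keeps it disjoint; coping with these is presumably why the statement is isolated as a separate claim.
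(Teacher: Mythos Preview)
Your rerouting-plus-minimality shape is right, but the argument as sketched has a real gap exactly where you anticipate it: using the violating path $R$ itself to build the replacement forces you to control where $R$ first re-enters the configuration, and the ad hoc fixes you list (rerouting inside $A$, discarding paths) do not obviously work. In particular, if $R$ first meets some $P_j$ with $j\ne i$ and you replace $P_j$ by a path through $R$, there is no reason the new path should itself have the desired property, so no natural measure decreases.

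The paper's proof avoids all of this with a single move you almost made. You use edge-minimality of $D$ to show that the specific vertex $y_i$ is unreachable from $A$ in $D-e_i$; the paper instead invokes \cref{lem:minimal_property} to obtain a vertex $v\in U$ (in general not the endpoint of $P_k$) with no directed $A$--$v$ path in $D-e_k$, and then takes $P_k'$ to be \emph{any} directed $A$--$v$ path in $D$. Since every such path must use $e_k$ and $e_k$ has its tail in $A$, the edge $e_k$ is forced to be the first edge of $P_k'$. Now everything is automatic. If some $P_j$ with $j\ne k$ met $P_k'$ outside $A$ at a vertex $w$, then $P_j w P_k'$ would contain a directed $A$--$v$ path in $D-e_k$; and if there were a directed $A$--$\tilde P_k'$ path in $D-e_k$, continuing along $P_k'$ past its first edge would again yield such a path. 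So $P_k'$ is disjoint from the other paths outside $A$, already has the desired property, and shares its first edge (hence its startvertex in $A$) with $P_k$, so the family type from \cref{propmainstructure} is preserved. Minimising the number of violating paths over all admissible families then gives the contradiction directly, with no case analysis and no discarding.

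The idea you were missing: choose the \emph{target} of the replacement path via edge-minimality, rather than its \emph{route}.
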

	\begin{claimproof}
		We assume that the number of directed paths in $(P_i: i \in [n])$ without this property is minimal under all possible choices for such directed paths $(P_i)_{i \in [n]}$.
		Suppose for a contradiction that there exists some $k \in [n]$ such that there is a directed $A$--$\Tilde{P}_k$~path in $D - e_k$.
		
		We apply \cref{lem:minimal_property} to the edge $e_k$ to obtain a vertex $v \in U$ such that there is no directed $A$--$v$~path in $D -e_k$.
		Let $P_k'$ be a directed $A$--$v$~path in $D$.
		Note that $e_k$ is the first edge of $P_k'$ by choice of $e_k$.
		Furthermore, every $P_j$ for $j \in [n] \setminus \{k\}$ intersects $P_k'$ only in $A$ since otherwise there is a directed $A$--$v$~path in $P_j \cup (P_k' - e_k) \subseteq
		D - e_k$ as $P_j \subseteq D -e_k$.
		Additionally, if $P_k$ is disjoint to $\bigcup_{j \in [n] \setminus \{k\}} P_j$, then $P_k'$ is disjoint to $\bigcup_{j \in [n] \setminus \{k\}} P_j$.
		Thus the family consisting of $(P_i: i \in [n] \setminus \{k\})$ and $P_k'$ contradicts the minimality of $(P_i)_{i \in [n]}$.
	\end{claimproof}
	Let $v_i \in U$ be the last vertex of $P_i$ and let $Q_i$ be some directed $v_i$--$A$~path that is laced with $P_i$ for every $i \in [n]$, which exists by \cref{prop:laced_paths}.
	Further let $z_i$ be the last vertex of $Q_i$ that is contained in $\Tilde{P}_i$.
	
	\begin{claim}\label{clm:Pavoids}
		The directed path $Q_i$ avoids $\Tilde{P}_j \cup Q_j z_j$ for every $i \neq j \in [n]$.
	\end{claim}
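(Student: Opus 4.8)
The plan is to argue by contradiction: if some vertex of $Q_i$ lies in $\Tilde{P}_j\cup Q_j z_j$, I would exhibit a directed $A$--$\Tilde{P}_j$~path inside $D-e_j$, contradicting \cref{clm:notAvoidingFirstEdge}. Before starting I would record two facts. First, $e_j\notin E(Q_j)$: the tail of $e_j$ is the startvertex of $P_j$, which lies in $V(A)$, whereas $Q_j$ is a directed $v_j$--$A$~path and therefore meets $V(A)$ only in its endvertex; since the tail of an edge of a path is never that path's endvertex, $e_j\notin E(Q_j)$, and in particular $Q_j z_j$ and every subpath of $Q_j$ avoid $e_j$. Secondly, $e_j\notin E(P_i)$: the head of $e_j$ is the first vertex of $\Tilde{P}_j$, so it lies in $V(\Tilde{P}_j)$, which is disjoint from $V(A)$; as $V(P_i)\cap V(P_j)\subseteq V(A)$ by \cref{propmainstructure}, this head is not on $P_i$, hence $e_j\notin E(P_i)$. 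I would also note that no $v_k$ lies in $V(A)$ --- otherwise the corresponding $P_k$ could be replaced by the trivial path at $v_k$, and if this leaves fewer than $n$ non-trivial paths then $A$ is already the desired butterfly minor of type~\labelcref{itm:centre_4} --- so that $Q_i$, being a directed $v_i$--$A$~path, meets $V(A)$ only in its endvertex, and $V(\Tilde{P}_j)\cap V(A)=\emptyset$.

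Now assume $Q_i$ meets $\Tilde{P}_j\cup Q_j z_j$ and let $x$ be the first such vertex in the order $\le_{Q_i}$. Consider the directed walk $W$ obtained by concatenating $P_i$ (from its startvertex in $V(A)$ to $v_i$) with the initial segment $Q_i x$. I claim $W\subseteq D-e_j$. Since $e_j\notin E(P_i)$, it suffices to check $e_j\notin E(Q_i x)$. If $e_j$ lay on $Q_i x$, its head $h$ --- the first vertex of $\Tilde{P}_j$, hence a vertex of $Q_i$ in $\Tilde{P}_j\cup Q_j z_j$ with $h\le_{Q_i}x$ --- would satisfy $h=x$ by the minimal choice of $x$; then the tail of $e_j$, which is the startvertex of $P_j\in V(A)$, would be the vertex of $Q_i$ immediately preceding $x$, hence (as it differs from $v_i\notin V(A)$) an internal vertex of $Q_i$, contradicting that $Q_i$ meets $V(A)$ only in its endvertex. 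So $W\subseteq D-e_j$.

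If $x\in\Tilde{P}_j$, then $W$ is a directed walk from $V(A)$ to $V(\Tilde{P}_j)$ inside $D-e_j$; since $V(\Tilde{P}_j)\cap V(A)=\emptyset$, it contains a directed $A$--$\Tilde{P}_j$~path in $D-e_j$, contradicting \cref{clm:notAvoidingFirstEdge}. Otherwise $x\in Q_j z_j$, and I would extend $W$ by the subpath $xQ_j z_j$ of $Q_j$, which avoids $e_j$ by the first fact; the extended directed walk runs from $V(A)$ to $z_j\in V(\Tilde{P}_j)$ inside $D-e_j$ and again contains a directed $A$--$\Tilde{P}_j$~path there, the same contradiction. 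This proves the claim. The only delicate part, and the one I would be most careful about, is precisely this bookkeeping: ensuring the constructed walk genuinely avoids $e_j$ while still reaching $\Tilde{P}_j$ away from $V(A)$ --- which is where the minimality of $x$ and the reduction "$v_k\notin V(A)$" are essential.
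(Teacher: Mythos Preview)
Your argument is correct and follows the same route as the paper: assume $Q_i$ meets $\tilde P_j\cup Q_jz_j$, and use $P_i$ together with an initial segment of $Q_i$ (and, if needed, a piece of $Q_j$) to produce a directed $A$--$\tilde P_j$~walk in $D-e_j$, contradicting \cref{clm:notAvoidingFirstEdge}. The paper phrases this as obtaining a directed $A$--$v_j$~path inside $(P_i\cup Q_i)\cup(\tilde P_j\cup Q_jz_j)\subseteq D-e_j$, which is the same contradiction since $v_j\in V(\tilde P_j)$; you simply spell out the bookkeeping (that $e_j\notin E(P_i)$, $e_j\notin E(Q_j)$, and $e_j\notin E(Q_ix)$) that the paper leaves implicit. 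One minor remark: your detour to justify $v_k\notin V(A)$ via ``replace $P_k$ by a trivial path'' is unnecessary --- it follows directly from $P_k$ being a non-trivial directed $A$--$U$~path in the construction of \cref{propmainstructure}.
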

	\begin{claimproof}
		Suppose for a contradiction that $Q_i$ intersects $\Tilde{P}_j \cup Q_j z_j$.
		Then $Q_i \cup \Tilde{P}_j \cup Q_j z_j$ contains a directed $v_i$--$v_j$~path that avoids $e_j$.
		Since $P_i$ avoids $e_j \in E(P_j)$, there exists a directed $A$--$v_j$~path in $(P_i \cup Q_i) \cup (\Tilde{P}_j \cup Q_j z_j) \subseteq D - e_j$, a contradiction to~\cref{clm:notAvoidingFirstEdge}.
	\end{claimproof}
	
	By \cref{clm:Pavoids}, $Q_i$ and $Q_j$ intersect only in $(V(z_iQ_i) \cap V(z_jQ_j)) \setminus \{z_i, z_j\}$ for every $i \neq j \in [n]$.
	In particular, if $x \in Q_i \cap Q_j$ for some $i \neq j \in [n]$, then $xQ_i$ avoids $\bigcup_{k \in [n]} \Tilde{P}_k$.
	Up to re-routing of the directed paths $(z_iQ_i)_{i \in [n]}$ we can assume that $\bigcup_{i \in [n]} (z_iQ_i)$ is a disjoint union of in-arborescences.
	See \cref{fig:out_in_path}.
	
	Finally we obtain the desired butterfly minor $D'$ by
	\begin{itemize}
		\item butterfly contracting $\Tilde{P}_i \cup Q_i z_i$ to a double path $F_i$ for every $i \in [n]$,
		\item butterfly contracting the common terminal segment of $Q_i$ and $Q_j$ to a single vertex for every $i \neq j \in [n]$, and
		\item butterfly contracting the remaining part of $z_i Q_i$ to an edge $(z_i, b_i)$ for every $i \in [n]$.
	\end{itemize}
	If $A$ is of type~\labelcref{itm:main_structure_1}, $D'$ is of type~\labelcref{itm:centre_1} and if $A$ is of type~\labelcref{itm:main_structure_2}, $D'$ is of type~\labelcref{itm:centre_2}.
	Further, if $A$ is of type~\labelcref{itm:main_structure_3}, $D'$ is of type~\labelcref{itm:centre_3}.
	This completes the proof.
\end{proof}

\begin{figure}[ht]
	\begin{tikzpicture}
		
		\node[draw, circle, minimum size=2cm, fill=lightgray] at (2,1) {$A$};
		
		\foreach \a in {3,...,8}{
			\draw[edge][PastelOrange] ({\a+0.1},1) to ({\a+0.9},1);
			\draw[black] (\a,1) node {\huge .};
			\draw[black] ({\a+1},1) node {\huge .};
		}
		
		\draw[edge][CornflowerBlue] (4,1.1) to[bend right] (2.1,3);
		\draw[edge][CornflowerBlue] (8,1.1) to[bend right] (5,1.1);
		\draw[edge][CornflowerBlue] (6,0.9) to[bend left] (4,0.9);
		\draw[edge][CornflowerBlue] (9,0.9) to[bend left] (7,0.9);
		\draw[edge][CornflowerBlue] (0,1.1) to[bend left] (1.9,3);
		\draw[edge][CornflowerBlue] (2,2.9) to (2,2);

		\draw (1.8,2.2) node {$r$};
		
		\draw[CornflowerBlue] (8,2) node {${Q}_2$};
		\draw[CornflowerBlue] (-0.3,2) node {${Q}_1$};
		\draw[PastelOrange] (7,0.5) node {$P_2$};
		\draw[PastelOrange] (-0.3,0.5) node {$P_1$};
		
		\draw[black] (2,2) node {\huge.};
		\draw[black] (0,1) node {\huge.};
		\draw[black] (-1,1) node {\huge.};
		\draw[black] (2,3) node {\huge.};
		\draw[black] (1,1) node {\huge.};
		
		\draw[edge][PastelOrange] (0.9,1) to (0.1,1);
		\draw[edge][PastelOrange, bend left] (-0.1,1) to (-0.9,1);
		\draw[edge, CornflowerBlue, bend left] (-0.9,1) to (-0.1,1);
		
		\draw[edge, CornflowerBlue, dashed] (5.2,1) to (5.9,1);
		\draw[edge, CornflowerBlue, dashed] (7.2,1) to (7.9,1);

	\end{tikzpicture}
	\caption{The setup in the proof of~\cref{propgroundwork}: A strongly connected subgraph $A$ and two disjoint directed $A$--$U$~paths $P_1$ and $P_2$. The directed paths $Q_1$ and $Q_2$ are laced with the directed paths $P_1$ and $P_2$, respectively, and share a common terminal segment. Furthermore, $Q_1$ avoids $P_2$ and $Q_2$ avoids $P_1$.
	}
	\label{fig:out_in_path}
\end{figure}

\section{Proof of \cref{finite}} \label{sec:proof}
We turn our attention to the main theorem:
\finite*

Before proving \cref{finite}, we show the existence of certain in-arborescences and out-arborescences in every subgraph $A$ of type~\labelcref{itm:main_structure_3}, and prove that for every three vertices in a strongly connected directed graph there exists a simple butterfly minor connecting them.

\begin{prop}\label{propinoutbranchingtree}
	Let $(C_j)_{j \in [p]}$ be a sequence of directed cycles with the property that for every $j \neq k$ $C_j\cap C_k$ is a directed path if $|j-k|=1$ and empty otherwise.
	Then for every $k \in [p-1]$ there exist an in-arborescence $T^{\mathrm{in}} \subseteq \bigcup_{j \in [p]} C_j$ and an out-arborescence $T^{\mathrm{out}} \subseteq \bigcup_{j \in [p]} C_j$ that intersect only in their common root such that $\bigcup_{i \in [p] \setminus [k]} V(C_i) \subseteq V(T^{\mathrm{in}})$ and $\bigcup_{i \in [k]} V(C_i) \backslash V(C_{k+1}) \subseteq V(T^{\mathrm{out}})$.
	\end{prop}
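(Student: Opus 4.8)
The plan is to build $T^{\mathrm{out}}$ from the "left part" $C_1 \cup \dots \cup C_k$ and $T^{\mathrm{in}}$ from the "right part" $C_{k+1} \cup \dots \cup C_p$, choosing the common root to lie on the overlap $C_k \cap C_{k+1}$, which by hypothesis is a directed path. First I would set up notation: write $P_j := C_j \cap C_{j+1}$ for $j \in [p-1]$, a nontrivial directed path, and note that since $C_i \cap C_j = \emptyset$ when $|i-j| \geq 2$, the cycles overlap only in these consecutive segments. Each cycle $C_j$ with $j \in [2,p-1]$ thus decomposes into an "incoming" part shared with $C_{j-1}$ and an "outgoing" part shared with $C_{j+1}$; the key structural point to pin down is that $P_{j-1}$ and $P_j$ are vertex-disjoint (or meet in at most one vertex — I would check the definition forces disjointness when the cycles are distinct and $p \geq 3$, since otherwise $C_{j-1}$ and $C_{j+1}$ would share a vertex), and that $C_j$ traversed once goes: along $P_{j-1}$, then through a stretch of "private" vertices of $C_j$, then along $P_j$, then back through more private vertices to the start of $P_{j-1}$.

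Next I would construct $T^{\mathrm{out}}$. Pick the root $r$ to be a vertex of $P_k = C_k \cap C_{k+1}$; concretely, let $r$ be the endvertex of $P_k$ (the last vertex of the shared segment in the cyclic orientation of $C_k$). I claim one can route, from $r$, directed paths inside $C_1 \cup \dots \cup C_k$ reaching every vertex of $\bigcup_{i \in [k]} V(C_i)$, and moreover assemble them into an out-arborescence. The idea: follow $C_k$ forward from $r$; this covers all of $C_k$ except it re-enters $P_{k-1}$; at the first vertex of $P_{k-1}$ we can branch — one branch continues along $C_k$'s remaining private vertices back toward $r$ (stop before reaching $r$ to keep it a tree), the other branch follows $C_{k-1}$ forward from that entry point into $P_{k-1}$, covering $C_{k-1}$, and recurse. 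Formally this is an induction on $k$: having built an out-arborescence rooted appropriately in $C_1 \cup \dots \cup C_{k-1}$ whose root sits on $P_{k-1}$, attach $C_k$ by reading it as a directed path from (a vertex of) $P_{k-1}$ around to $r$ and gluing. The only care needed is to delete one edge where a directed walk would close up into a cycle, so that the union stays a tree; since $C_k$ is a single cycle, deleting exactly one edge of it suffices locally. The requirement $\bigcup_{i \in [k]} V(C_i) \setminus V(C_{k+1}) \subseteq V(T^{\mathrm{out}})$ is met because we cover all of $C_1, \dots, C_k$ except possibly we sacrifice vertices of $P_k = C_k \cap C_{k+1} \subseteq V(C_{k+1})$, which the statement explicitly allows us to exclude.

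Then I would construct $T^{\mathrm{in}}$ by the mirror-image argument in $C_{k+1} \cup \dots \cup C_p$, orienting toward the same root $r \in P_k$: reverse the roles of "forward" and "backward" and of in/out, using that $r$ also lies on $C_{k+1}$. Here we need $\bigcup_{i \in [p] \setminus [k]} V(C_i) \subseteq V(T^{\mathrm{in}})$ with \emph{no} exceptions, which is why the root should be chosen as a vertex of $P_k$ that is "extremal on the $C_{k+1}$ side" — pick $r$ to be the startvertex of $P_k$ in the orientation of $C_{k+1}$ if that is what makes all of $C_{k+1}, \dots, C_p$ reachable into $r$; I would reconcile the two extremality demands by taking $P_k$ to be trivial in the worst case, or more robustly by noting $P_k$ has both a first and last vertex and the two constructions only constrain which endpoint of $P_k$ each uses, and these can be made the same endpoint. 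Finally I would verify $V(T^{\mathrm{in}}) \cap V(T^{\mathrm{out}}) = \{r\}$: $T^{\mathrm{out}} \subseteq C_1 \cup \dots \cup C_k$ and $T^{\mathrm{in}} \subseteq C_{k+1} \cup \dots \cup C_p$, and these two subgraphs share only $V(P_k) \subseteq V(C_k) \cap V(C_{k+1})$; since $T^{\mathrm{out}}$ omits $V(P_k) \setminus \{r\}$ by construction (it excludes $V(C_{k+1})$ except for $r$) and $T^{\mathrm{in}}$ contains $r$, the intersection is exactly $\{r\}$, and $r$ is the root of each.

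The main obstacle I anticipate is the bookkeeping around the shared segments $P_j$: ensuring the inductively built tree's current root always sits at the correct endpoint of the next overlap so that the gluing produces a genuine arborescence (every non-root vertex with out-degree one toward the root, respectively in-degree one), rather than accidentally creating a vertex of out-degree two pointing the wrong way or closing a directed cycle. Handling the degenerate cases — $p=1$ (then one of the two vertex sets in the conclusion is empty and the statement is trivial), $k = p-1$, trivial overlaps $P_j$ — cleanly, and making the "delete one edge to break the cycle" step precise, is the part that needs genuine care; everything else is routine once the traversal of each $C_j$ is described explicitly.
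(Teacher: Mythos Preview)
Your approach is essentially the paper's: choose the root on $P_k = C_k \cap C_{k+1}$, build $T^{\mathrm{out}}$ inside $\bigcup_{j \le k} C_j$ and $T^{\mathrm{in}}$ inside $\bigcup_{j > k} C_j$, and use that the two halves meet only in $V(P_k)$ so excluding $V(P_k)\setminus\{r\}$ from $T^{\mathrm{out}}$ forces the intersection to be $\{r\}$. The paper also takes $r$ to be the endvertex of $P_k$.

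Where you work harder than necessary is in the explicit inductive traversal and in the worry about ``reconciling'' the two extremality demands on $r$. The paper sidesteps both in one line: $\bigcup_{j\in [p]\setminus [k]} C_j$ is strongly connected, so it has a spanning in-arborescence rooted at \emph{any} vertex---in particular at the endvertex $v$ of $P_k$. There is no constraint on $r$ coming from the $T^{\mathrm{in}}$ side at all; the only constraint is from $T^{\mathrm{out}}$, and taking $r$ to be the endvertex of $P_k$ means that in $\bigcup_{j\le k} C_j - (V(P_k)\setminus\{v\})$ one can still reach every vertex of $\bigcup_{i\le k} V(C_i)\setminus V(C_{k+1})$ from $v$ (just follow $C_k$ out of $v$), giving $T^{\mathrm{out}}$ directly. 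So your plan is correct, but the paper's two-sentence argument via strong connectivity replaces all of your inductive bookkeeping and dissolves the endpoint-reconciliation concern.
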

	\begin{proof}
		Let $P$ be the directed path in $C_{k+1}\cap C_k$ and let $v$ the last vertex of $P$.
		Since $\bigcup_{j \in [p] \setminus [k]} C_j$ is strongly connected, there exists an in-arborescence spanning $\bigcup_{j \in [p] \setminus [k]} V(C_j)$ with root $v$.
		Since every vertex of $\bigcup_{i \in [k]} V(C_i) \backslash V(C_{k+1})$ can be reached from $v$ in $\bigcup_{j \in [k]} C_j - (V(P) \setminus \{v\})$, there exists an out-arborescence $T^{\mathrm{out}}$ spanning $\bigcup_{i \in [k]} V(C_i) \backslash V(C_{k+1})$ rooted at $v$ that hits $T^{\mathrm{in}}$ only in its root.
		Then $T^{\mathrm{in}}$ and $T^{\mathrm{out}}$ are as desired.
	\end{proof}

\begin{prop}\label{prop3star}
	Let $D$ be a strongly connected directed graph and let $u_1, u_2, u_3 $ be distinct vertices of $D$.
	Then there exists a butterfly minor of $D$ that is
	\begin{itemize}
		\item a double path containing $u_1, u_2, u_3$,
		\item the union of three non-trivial double paths that intersect only in a common endpoint and whose other endpoints are $u_1, u_2, u_3$, or
		\item the union of a directed cycle $C$ of length $3$ and three disjoint (possibly trivial) double paths having one endpoint in $V(C)$ and whose other endpoints are $u_1, u_2, u_3$.
	\end{itemize}
\end{prop}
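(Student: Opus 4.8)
The plan is to reduce the problem to the three structural types produced by \cref{propgroundwork} applied to the set $U=\{u_1,u_2,u_3\}$ — so essentially to run the machinery of Section~\ref{sec:centre} with $n=3$, but being careful that $|U|=3$ is tiny, so one must interleave the argument with \cref{propmainstructure} applied directly rather than through a Ramsey step. First I would take a strongly connected subgraph $A$ and a family of (possibly trivial) directed $A$--$\{u_1,u_2,u_3\}$~paths $P_1,P_2,P_3$ via the proof of \cref{propmainstructure}: construct an out-arborescence $T$ spanning $\{u_1,u_2,u_3\}$ with leaves in this set, and distinguish whether some vertex has out-degree $3$ in $T$ (giving type~\labelcref{itm:main_structure_1} with $A$ a single vertex and three paths), or not. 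If not, the underlying tree $T$ has few branch vertices, and since it has at most three leaves its only branch vertex has out-degree exactly $2$; then a single directed path $R$ in $T$ reaches all of $\{u_1,u_2,u_3\}$ except possibly one, and one more path handles the third, so after lacing $R$ with a return path $S$ (via \cref{prop:laced_paths}) we land in the cycle-chain situation, i.e.\ $A=\bigcup_{j\in[p]}C_j$ with $R\cup S=\bigcup C_j$, and three disjoint terminal $A$--$\{u_1,u_2,u_3\}$ segments $P_1,P_2,P_3$, each starting on a distinct cycle or at worst two on the same cycle.

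Next I would run the argument of the proof of \cref{propgroundwork} verbatim with $n=3$: if all three $P_i$ are trivial, then all of $u_1,u_2,u_3$ lie in $A=\bigcup C_j$, which by \cref{obs:double_path}'s remark is a chain of cycles that butterfly-contracts to a double path containing all three vertices (shrink $A$ along an out-arborescence and an in-arborescence from \cref{propinoutbranchingtree}), giving the first bullet. Otherwise, after possibly re-choosing the paths so that none of them admits an $A$--$\Tilde P_i$~path avoiding its first edge (\cref{clm:notAvoidingFirstEdge}, using edge-minimality after passing to an edge-minimal strongly connected subgraph containing $U$), pick return paths $Q_i$ laced with $P_i$, observe via the analogue of \cref{clm:Pavoids} that the $Q_i$ meet only in shared terminal in-arborescence segments, and contract: each $\Tilde P_i\cup Q_iz_i$ to a double path $F_i$, the shared tails of the $Q_i$ to vertices, and the rest of $z_iQ_i$ to single edges. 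This yields exactly the structure in \cref{propgroundwork}: $A$ a single vertex (type~\labelcref{itm:centre_1}), a directed cycle through the $a_i,b_i$ (type~\labelcref{itm:centre_2}), or a chain of cycles each meeting at most one $a_i$ (type~\labelcref{itm:centre_3}), together with the three double paths $F_i$ attached at $a_i/b_i$ and ending at $u_i$.

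Finally I would collapse each of these three cases to one of the three claimed outcomes. In type~\labelcref{itm:centre_1}, $A$ is a single vertex $a$ and the three $F_i$ together with the edges $(a,z_i),(z_i,a)$ form three non-trivial double paths meeting only at $a$, which is the second bullet. In type~\labelcref{itm:centre_2}, $A$ is a directed cycle through at most three marked vertices $a_1,a_2,a_3$; since $|U|=3$ the cycle carries at most three attachment points, and I would butterfly-contract the cyclic arcs between consecutive marked vertices down until $A$ becomes a directed cycle of length exactly $3$ (or smaller, in which case two $F_i$ share an endpoint and I further contract to merge double paths, landing back in the second bullet or the first), giving the third bullet. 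In type~\labelcref{itm:centre_3}, the chain of cycles contains at most three marked cycles; I would use \cref{propinoutbranchingtree} repeatedly to contract maximal sub-chains containing at most one $a_i$ each into double-path pieces, merging them into the $F_i$, which reduces type~\labelcref{itm:centre_3} to type~\labelcref{itm:centre_2} with at most three marked vertices on a single cycle, and then proceed as before. The main obstacle I anticipate is exactly this last reduction: handling the degenerate small cases (fewer than three distinct attachment vertices on the cycle, or a cycle of length $<3$) cleanly, and checking that every contraction used is along butterfly-contractible edges — for the cyclic arcs this needs the usual observation that an arc of a cycle chain, or a pendant arborescence edge, is the unique out- or in-edge at one of its ends, so is butterfly contractible, which is already implicit in \cref{obs:double_path} and \cref{propinoutbranchingtree}.
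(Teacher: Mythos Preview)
Your approach is quite different from the paper's. The paper gives a short, self-contained argument: pass to an edge-minimal butterfly minor $D'$ that is strongly connected and contains $u_1,u_2,u_3$; write $D'=P\cup Q\cup R\cup S$ where $P,Q$ are laced $u_1$--$u_2$ and $u_2$--$u_1$ paths and $R,S$ are laced $u_3$--$(P\cup Q)$ and $(P\cup Q)$--$u_3$ paths; then use edge-minimality to force every vertex other than the $u_i$ to have in- and out-degree at least~$2$, and read off the three outcomes from a brief case split on whether $P\cup Q$ is already a double path. No call to \cref{propgroundwork} or \cref{propmainstructure} is made.

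Your route through the machinery of Section~\ref{sec:centre} has a genuine gap in the final reduction. In types~\labelcref{itm:centre_2} and~\labelcref{itm:centre_3} of \cref{propgroundwork}, each double path $F_i$ is attached to $A$ not at a single vertex but via \emph{two} edges $(a_i,z_i)$ and $(z_i,b_i)$ with $a_i,b_i\in V(A)$ in general distinct; so the cycle (or cycle-chain) carries up to six marked points, not three. Your plan to ``contract the cyclic arcs between consecutive $a_i$'' ignores the $b_i$: after that contraction each $z_i$ still has an out-edge landing on some vertex of the resulting $3$-cycle that need not equal $a_i$, so what you get is not a $3$-cycle with three pendant double paths but a $3$-cycle with additional chords through the $z_i$. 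Untangling this --- deciding, for each cyclic arrangement of $a_1,a_2,a_3,b_1,b_2,b_3$, which edges to delete and contract to reach one of the three target shapes --- is precisely the interval analysis (cases~\labelcref{itm:prop_2_1}--\labelcref{itm:prop_2_3}) that the paper carries out for large $n$ in the proof of \cref{main_theorem}, and for $n=3$ it does not collapse automatically. A similar issue arises in your reduction of type~\labelcref{itm:centre_3} to type~\labelcref{itm:centre_2}. The obstacle you flagged (degenerate short cycles, checking butterfly-contractibility of arcs) is not the real one; the missing ingredient is the treatment of the $b_i$.
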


\begin{proof}
	Let $D'$ be an edge-minimal butterfly minor of $D$ that is strongly connected and contains $u_1, u_2, u_3$.
	By \cref{prop:laced_paths}, there exist a directed $u_1$--$u_2$~path $P$ and a directed $u_2$--$u_1$~path $Q$ in $D'$ that are laced.
	Further, there exist a directed $u_3$--$(P \cup Q)$~path $R$ and a directed $(P \cup Q)$--$u_3$~path $S$ such that $R$ and $S$ are laced by \cref{prop:laced_paths}. 
	
	Note that $D' = P \cup Q \cup R \cup S$ by edge-minimality.
	Further, note that every vertex of $V(D') \setminus \{u_1, u_2, u_3\}$ has in-degree at least $2$ and out-degree at least $2$ since otherwise some edge incident to it can be butterfly contracted, a contradiction to edge-minimality of $D'$.
	
	First, we assume that $P \cup Q$ is a double path.
	Note that $R \cup S$ contains a trail connecting the startvertex of $S$ and the endvertex of $R$.
	Thus the startvertex of $S$ and the endvertex of $R$ coincide since otherwise some edge of $P \cup Q$ with tail in the startvertex of $S$ can be deleted without disconnecting $u_1, u_2, u_3$, a contradiction to the edge-minimality of $D'$.
	This implies that $R \cup S$ is a double path as all vertices of $R \cup S$ but $u_3$ have in- and out-degree at least $2$ in $D'$.
	Therefore $D'$ is a double path containing $u_1, u_2, u_3$, if the startvertex of $S$ and the endvertex of $R$ is either $u_1$ or $u_2$, and otherwise $D'$ is a union of three double paths intersecting only in a common endpoint whose other endpoints are $u_1, u_2, u_3$, as desired.
	
	Now we assume that $P \cup Q$ is not a double path.
	Since all vertices of $P \cup Q$ but $u_1, u_2, u_3$ have in- and out-degree at least $2$ in $D'$, $P \cup Q$ is obtained from a double path with endpoints $u_1, u_2$ by subdividing one edge and the subdivision vertex of $P \cup Q$ is the startvertex of $S$ and the endvertex of $R$.
	This implies that $R \cup S$ is a double path.
	Then $D'$ is the union of a directed cycle $C$ of length $3$ and disjoint (possibly trivial) double paths $F_1, F_2, F_3$ having one endpoint in $V(C)$ and whose other endpoints are $u_1, u_2, u_3$.
	This completes the proof.
\end{proof}

\begin{proof}[Proof of \cref{finite}]
		Let $n \in \NN$ be arbitrary. Further, let $m, m', m'' \in \NN$ with $m \gg m' \gg m'' \gg n$. We set $g(n):=\alpha(m)$, where $\alpha$ is the function from~\cref{propgroundwork}.
		Let $U$ be an arbitrary set of vertices in a strongly connected directed graph $D$ with $|U| \geq g(n)$.
	We apply \cref{propgroundwork} to $D$ and $U$ to obtain a butterfly minor $D'$ as stated in~\cref{propgroundwork} with $m$ elements in $U$.
	\begin{description}
		\item[$D'$ is of type~\labelcref{itm:centre_4}]
		If some cycle directed $C$ contains $n$ elements of $U$, we restrict to $C$ and butterfly contract some edges of $C$ to obtain a directed cycle containing precisely these $n$ vertices, which provides the desired butterfly minor shaped by a star.
		
		Otherwise, there is a family of directed cycles $(C_j)_{j \in [p]}$ with the property that for every $j \neq k$ $C_j\cap C_k$ is a directed path if $|j-k|=1$ and there is an $m$-element subset $U' \subseteq \bigcup_{j \in [p]} V(C_j) \cap U$ such that each directed cycle of $(C_j)_{j \in [p]}$ contains at most one element of $U'$.
			Then there is an $n$-element subset $U'' \subseteq U'$ such that either each element of $U''$ is contained in the intersection of two elements of $(C_j)_{j \in [p]}$ or each element of $U''$ is contained in precisely one element of $(C_j)_{j \in [p]}$.
			In both cases we construct a butterfly minor shaped by a comb.
			In the former case, the union $\bigcup_{j \in [p]} C_j$ can be butterfly contracted to a double path containing the vertices of $U''$ as in \cref{obs:double_path}. Note that double paths are shaped by a comb. In the latter case, we butterfly contract all directed cycles of $\bigcup_{j \in [p]} C_j$ as in \cref{obs:double_path} but butterfly contract the directed cycles containing a vertex of $U''$ to directed cycles of length $3$. This ensures that the vertices of $U''$ are still present in the butterfly minor. Note that also this graph is shaped by a comb.
		
		\item[$D'$ is of type~\labelcref{itm:centre_1}]
		As $A$ is a single vertex, $D'$ is a union of double paths that intersect only in $A$. Thus $D'$ is shaped by a star by definition.
		\item[$D'$ is of type~\labelcref{itm:centre_2}]
		We set $[x,y]:= V(xCy)$ for every $x, y \in V(C)$.
		Furthermore, we write $x \leq_C y \leq_C z$ if $x,y,z \in V(C)$ appear in this order on $C$.
		For every $i \neq i' \in [m]$ precisely one of the following holds:
		\begin{enumerate}[label=(\arabic*)]
			\item\label{itm:prop_2_1} $[b_i,a_i]\cap [b_{i'},a_{i'}] = \emptyset$
			\item\label{itm:prop_2_2} $([b_i,a_i]\subseteq [b_{i'},a_{i'}]) \lor ([b_{i'},a_{i'}]\subseteq [b_{i},a_{i}])$
			\item\label{itm:prop_2_3} $([b_i,a_i]\cap [b_{i'},a_{i'}] \neq \emptyset) \land ([b_i,a_i] \nsubseteq [b_{i'},a_{i'}]) \land ([b_{i'},a_{i'}]\nsubseteq [b_{i},a_{i}])$
		\end{enumerate}
		By Ramsey's theorem, \cref{ramsey1}, we can assume, up to relabelling, that for every $i \neq i' \in [m']$ the same property applies.
		Further, we assume that $a_1 <_C a_2 <_C \dots <_C a_{m'}$.
		From now on we consider the subgraph $C \cup \bigcup_{i \in [m']} (F_i + (a_i,z_i) + (z_i, b_i))$.
		
		\begin{itemize}
			\item If \labelcref{itm:prop_2_1} applies to every $i \neq j \in [m']$:
			Then $b_1 \leq_C a_1 <_C b_2 \leq_C a_2 <_C b_3 \leq_C a_3 <_C \dots$.
			See~\cref{fig:prop_2_1}.
			
			By butterfly contracting $C[[b_i,a_i]]$ to a single vertex  and butterfly contracting $C[[a_i, b_{i+1}]]$ to a single edge for every $i \in [m']$ ($b_{m'+1}:=b_1$) we obtain a butterfly minor that is shaped by a star with $m'$ teeth in $U$.
			
			\begin{figure}[h]
				\begin{tikzpicture}
					\draw (1,0) -- (8,0);
					\draw (7.1,1) -- (8,1);
					\draw[edge]  (1,1) to (1.9,1);
					
					\foreach \a in {3,5}{
						\draw[edge] ({\a+0.1},1) to ({\a+0.9},1);		
					}
					\foreach \a in {2, 4, 6}{
						\draw[edge][dashed] ({\a+0.1},1) to ({\a+0.9},1);		
					}	
					\draw (1,0) .. controls (0,0) and (0,1) .. (1,1);
					\draw (8,0) .. controls (9,0) and (9,1) .. (8,1);
					\draw[] (2,1) node {\huge.};
					\draw (2,0.5) node {$b_1$};
					\draw[] (3,1) node {\huge.};
					\draw (3,0.5) node {$a_1$};
					\draw[] (4,1) node {\huge.};
					\draw (4,0.5) node {$b_2$}; 
					\draw[] (5,1) node {\huge.};
					\draw (5,0.5) node {$a_2$}; 
					\draw[] (6,1) node {\huge.};
					\draw (6,0.5) node {$b_3$}; 
					\draw[] (7,1) node {\huge.};
					\draw (7,0.5) node {$a_3$};  
					
					\draw[edge][CornflowerBlue] (3,1.1) to (2.6,1.9);
					\draw[CornflowerBlue] (2.5,2) node {\huge.};
					\draw[edge][CornflowerBlue] (2.4,1.9) to (2,1.1);
					
					\draw[edge][PastelOrange] (5,1.1) to (4.6,1.9);
					\draw[PastelOrange] (4.5,2) node {\huge.};
					\draw[edge][PastelOrange] (4.4,1.9) to (4,1.1);
					
					\draw[edge][LavenderMagenta] (7,1.1) to (6.6,1.9);
					\draw[LavenderMagenta] (6.5,2) node {\huge.};
					\draw[edge][LavenderMagenta] (6.4,1.9) to (6,1.1);
					
				\end{tikzpicture}
				\caption{A butterfly minor $D'$ of type~\labelcref{itm:centre_2} where \labelcref{itm:prop_2_1} applies to all pairs of~$[m']$ in the proof of~\cref{finite}.} \label{fig:prop_2_1}
			\end{figure}
			
			\item If \labelcref{itm:prop_2_2} applies to every $i \neq j \in [m']$:
			There exist one interval $[b_i, a_i]$ that is contained in all intervals of this form.
			We assume without loss of generality that $[b_1,a_1]$ is this interval.
			Then $\dots \leq_C b_3 \leq_C b_2 \leq_C b_1 \leq_C a_1 <_C a_2 <_C a_3 <_C \dots$.
			We remark that elements of $(b_i: i \in [n])$ can coincide.
			See~\cref{fig:prop_2_2}.
			
			We construct the desired butterfly minor in the following way:
			Firstly, we delete the edges of $C[[a_{m'},b_{m'}]]$.
			Now every vertex in $[b_{m'},a_1)$ has out-degree precisely $1$ and we butterfly contract $C[[b_{m'},a_1]]$ to the single vertex $a_1$.
			Similarly, each vertex in $(a_1,a_{m'}]$ has in-degree precisely $1$ and we butterfly contract $C[[a_1,a_{m'}]]$ to the single vertex $a_1$.
			This butterfly minor is the union of disjoint double paths $(F_i: i \in [m'])$ each connected to a single vertex $a_1$ via edges $(a_i, z_i)$ and $(z_i, b_i)$.
			Thus this butterfly minor is shaped by a star with $m'$ teeth in $U$.
			
			\begin{figure}
				\centering
				\begin{subfigure}{\textwidth}
					\centering
					\begin{tikzpicture}
						\draw[dotted] (1,0) -- (8,0);
						\draw[dotted] (7.1,1) -- (8,1);
						\draw[edge][dotted]  (1,1) to (1.9,1);
						
						\foreach \a in {2, ..., 6}{
							\draw[edge] ({\a+0.1},1) to ({\a+0.9},1);		
						}		
						\draw[dotted] (1,0) .. controls (0,0) and (0,1) .. (1,1);
						\draw[dotted] (8,0) .. controls (9,0) and (9,1) .. (8,1);
						\draw[] (2,1) node {\huge.};
						\draw (2,0.5) node {$b_3$};
						\draw[] (3,1) node {\huge.};
						\draw (3,0.5) node {$b_2$};
						\draw[] (4,1) node {\huge.};
						\draw (4,0.5) node {$b_1$}; 
						\draw[] (5,1) node {\huge.};
						\draw (5,0.5) node {$a_1$}; 
						\draw[] (6,1) node {\huge.};
						\draw (6,0.5) node {$a_2$}; 
						\draw[] (7,1) node {\huge.};
						\draw (7,0.5) node {$a_3$};  
						
						\draw[edge][CornflowerBlue] (6,1.1) to (4.6,1.9);
						\draw[CornflowerBlue] (4.5,2) node {\huge.};
						\draw[edge][CornflowerBlue] (4.4,1.9) to (3,1.1);
						
						\draw[edge][PastelOrange] (5,1.1) to (4.6,1.4);
						\draw[PastelOrange] (4.5,1.5) node {\huge.};
						\draw[edge][PastelOrange] (4.4,1.4) to (4,1.1);
						
						\draw[edge][LavenderMagenta] (7,1.1) to (4.6,2.4);
						\draw[LavenderMagenta] (4.5,2.5) node {\huge.};
						\draw[edge][LavenderMagenta] (4.4,2.4) to (2,1.1);
					\end{tikzpicture}
					\caption{The original graph.}
					\label{subfig1}
				\end{subfigure}
				\begin{subfigure}{0.55\textwidth}
					\centering
					\begin{tikzpicture}
						\draw[color=gray!30, line width=1,rounded corners=0.2cm, fill=gray!30] (4.6,0.7) rectangle (7.3,1.3);
						
						\draw[color=gray!30, line width=1,rounded corners=0.2cm, fill=gray!30] (1.7,0.7) rectangle (5.4,1.3);
						
						\draw[color=gray!70, line width=1,rounded corners=0.2cm, fill=gray!70] (4.6,0.7) rectangle (5.4,1.3);
						
						\foreach \a in {2, ..., 6}{
							\draw[edge] ({\a+0.1},1) to ({\a+0.9},1);		
						}
						
						\draw[] (2,1) node {\huge.};
						\draw (2,0.2) node {$b_3$};
						\draw[] (3,1) node {\huge.};
						\draw (3,0.2) node {$b_2$};
						\draw[] (4,1) node {\huge.};
						\draw (4,0.2) node {$b_1$}; 
						\draw[] (5,1) node {\huge.};
						\draw (5,0.2) node {$a_1$}; 
						\draw[] (6,1) node {\huge.};
						\draw (6,0.2) node {$a_2$}; 
						\draw[] (7,1) node {\huge.};
						\draw (7,0.2) node {$a_3$};  
						
						\draw[edge][CornflowerBlue] (6,1.4) to (4.6,2.2);
						\draw[CornflowerBlue] (4.5,2.3) node {\huge.};
						\draw[edge][CornflowerBlue] (4.4,2.2) to (3,1.4);
						
						\draw[edge][PastelOrange] (5,1.4) to (4.6,1.7);
						\draw[PastelOrange] (4.5,1.8) node {\huge.};
						\draw[edge][PastelOrange] (4.4,1.7) to (4,1.4);
						
						\draw[edge][LavenderMagenta] (7,1.4) to (4.6,2.7);
						\draw[LavenderMagenta] (4.5,2.8) node {\huge.};
						\draw[edge][LavenderMagenta] (4.4,2.7) to (2,1.4);
					\end{tikzpicture}
					\caption{The graph after deletion of the edges in $C[[a_{m'},b_{m'}]]$. The gray marked edges can be butterfly contracted.}
					\label{subfig2}
				\end{subfigure}
				\hfill
				\begin{subfigure}{0.4\textwidth}
					\centering
					\begin{tikzpicture}
						\draw (4.5,0.2) node {$a_1$}; 
						\draw[] (4.5,1) node {\huge.};
						
						\draw[edge][CornflowerBlue] (4.6,1.1) to[bend right] (4.6,2.2);
						\draw[CornflowerBlue] (4.5,2.3) node {\huge.};
						\draw[edge][CornflowerBlue] (4.4,2.2) to[bend right] (4.4,1.1);
						
						\draw[edge][PastelOrange] (4.6,1.1) to[bend right]  (4.6,1.7);
						\draw[PastelOrange] (4.5,1.8) node {\huge.};
						\draw[edge][PastelOrange] (4.4,1.7) to[bend right]  (4.4,1.1);
						
						\draw[edge][LavenderMagenta] (4.6,1.1) to[bend right]  (4.6,2.7);
						\draw[LavenderMagenta] (4.5,2.8) node {\huge.};
						\draw[edge][LavenderMagenta] (4.4,2.7) to[bend right]  (4.4,1.1);
					\end{tikzpicture}
					\caption{The butterfly minor obtained by butterfly contracting the gray marked edges.}
					\label{subfig3}
				\end{subfigure}
				
				\caption{A butterfly minor $D'$ of type~\labelcref{itm:centre_2} where \labelcref{itm:prop_2_2} applies to all pairs of $[m']$ in the proof of~\cref{finite}.}
				\label{fig:prop_2_2}
			\end{figure}

			\item If \labelcref{itm:prop_2_3} applies to every $i \neq j \in [m']$:
			The intervals $[b_i, a_i], [b_j, a_j]$ intersect either in one or in two intervals.
			By Ramsey's theorem, \cref{ramsey1}, we can assume, up to relabelling, that either $[b_i, a_i] \cap [b_j, a_j]$ is one interval for every $i \neq j \in [2n]$ or $[b_i, a_i] \cap [b_j, a_j]$ are two intervals for every $i \neq j \in [2n]$.
			Note that \labelcref{itm:prop_2_3} implies $a_i \neq b_i$ for every $i \neq j \in [m']$.
			
			If these intervals intersect pairwise in one interval, we can assume, up to relabelling, that the intervals $\{[b_i, a_i]: i \in [n] \setminus \{1\} \}$ contain either $a_1$ but not $b_1$ or $b_1$ but not $a_1$.
			In the former case, $a_1 \leq_C a_i <_C b_1 \leq_C b_i \leq_C a_1$ for every $i \in [n] \setminus \{1\}$.
			We construct the desired butterfly minor in the following way: See~\cref{fig:prop_2_3}.
			Firstly, we consider the subgraph $C \cup \bigcup_{i \in [n]} (F_i + (a_i, z_i) + (z_i, b_i))$ and delete the edges of $C[[a_{n},b_1]]$.
			Note that every vertex in $[b_{1},a_1)$ has out-degree precisely $1$ and we butterfly contract $C[[b_{1},a_1]]$ to the vertex~$a_1$.
			Similarly, each vertex in $(a_1,a_{n}]$ has in-degree precisely $1$ and we butterfly contract $C[[a_1,a_{n}]]$ to the single vertex $a_1$.
			This butterfly minor is the union of disjoint double paths $(F_i: i \in [n])$ that are connected to $a_1$ via edges $(a_i, z_i)$ and $(z_i, b_i)$.
			Thus this butterfly minor is shaped by a star with $n$ teeth in $U$.
			In the latter case, $a_1 <_C b_i \leq_C b_1 \leq_C a_i \leq_C a_1$ for every $i \in [n] \setminus \{1\}$ and we apply a similar argument to obtain a butterfly minor shaped by a star with $n$ teeth in $U$.
			
			If these intervals intersect pairwise in two intervals, then $a_i <_C b_i \leq_C a_j <_C b_j \leq_C a_i$ for every $i \neq j \in [2n]$.
			See~\cref{fig:prop_2_3_2}.
			We consider the directed cycle $C'$ consisting of the directed paths $b_1 C a_{2}, \dots, b_{2n} C a_1$ and the edges $(a_i, z_i), (z_i, a_i)$ for every $i \in [2n]$.
			The union of $C'$ and $(F_i: i \in [2n])$ is a subdivision of a directed graph shaped by a star with $2n$ teeth in $U$, as desired.
		\end{itemize}
		
		\begin{figure}[ht]
			\begin{tikzpicture}
				\draw[] (1,0) -- (8,0);
				\draw[] (7.1,1) -- (8,1);
				\draw[edge][]  (1,1) to (1.9,1);
				
				\foreach \a in {2, 3, 5, 6}{
					\draw[edge] ({\a+0.1},1) to ({\a+0.9},1);		
				}	
				\draw[edge][dotted] (4.1,1) to (4.9,1);
				
				\draw[] (1,0) .. controls (0,0) and (0,1) .. (1,1);
				\draw[] (8,0) .. controls (9,0) and (9,1) .. (8,1);
				\draw[] (2,1) node {\huge.};
				\draw (2,0.5) node {$a_1$};
				\draw[] (3,1) node {\huge.};
				\draw (3,0.5) node {$a_2$};
				\draw[] (4,1) node {\huge.};
				\draw (4,0.5) node {$a_3$}; 
				\draw[] (5,1) node {\huge.};
				\draw (5,0.5) node {$b_1$}; 
				\draw[] (6,1) node {\huge.};
				\draw (6,0.5) node {$b_2$}; 
				\draw[] (7,1) node {\huge.};
				\draw (7,0.5) node {$b_3$};  
				
				\draw[edge][CornflowerBlue] (2,1.1) to (2.9,1.9);
				\draw[CornflowerBlue] (3,2) node {\huge.};
				\draw[edge][CornflowerBlue] (3.1,1.9) to (5,1.1);
				
				\draw[edge][PastelOrange] (3,1.1) to (4.4,1.9);
				\draw[PastelOrange] (4.4,2) node {\huge.};
				\draw[edge][PastelOrange] (4.6,1.9) to (6,1.1);
				
				\draw[edge][LavenderMagenta] (4,1.1) to (5.9,1.9);
				\draw[LavenderMagenta] (6,2) node {\huge.};
				\draw[edge][LavenderMagenta] (6.1,1.9) to (7,1.1);
				
			\end{tikzpicture}
			\caption{A butterfly minor $D'$ of type~\labelcref{itm:centre_2} where \labelcref{itm:prop_2_3} applies to all pairs of $[m']$ and the intervals intersect pairwise in one interval in the proof of~\cref{finite}.}
			\label{fig:prop_2_3}
		\end{figure}
		
		\begin{figure}[h]
			\begin{tikzpicture}
				\draw (1,0) -- (8,0);
				\draw (7.1,1) -- (8,1);
				\draw[edge]  (1,1) to (1.9,1);
				
				\foreach \a in {3,5}{
					\draw[edge] ({\a+0.1},1) to ({\a+0.9},1);		
				}
				\foreach \a in {2, 4, 6}{
					\draw[edge][dotted] ({\a+0.1},1) to ({\a+0.9},1);		
				}	
				\draw (1,0) .. controls (0,0) and (0,1) .. (1,1);
				\draw (8,0) .. controls (9,0) and (9,1) .. (8,1);
				\draw[] (2,1) node {\huge.};
				\draw (2,0.5) node {$a_1$};
				\draw[] (3,1) node {\huge.};
				\draw (3,0.5) node {$b_1$};
				\draw[] (4,1) node {\huge.};
				\draw (4,0.5) node {$a_2$}; 
				\draw[] (5,1) node {\huge.};
				\draw (5,0.5) node {$b_2$}; 
				\draw[] (6,1) node {\huge.};
				\draw (6,0.5) node {$a_3$}; 
				\draw[] (7,1) node {\huge.};
				\draw (7,0.5) node {$b_3$};  
				
				\draw[edge][CornflowerBlue] (2.6,1.9) to (3,1.1);
				\draw[CornflowerBlue] (2.5,2) node {\huge.};
				\draw[edge][CornflowerBlue] (2,1.1) to (2.4,1.9);
				
				\draw[edge][PastelOrange] (4.6,1.9) to (5,1.1);
				\draw[PastelOrange] (4.5,2) node {\huge.};
				\draw[edge][PastelOrange] (4,1.1) to (4.4,1.9);
				
				\draw[edge][LavenderMagenta] (6.6,1.9) to (7,1.1);
				\draw[LavenderMagenta] (6.5,2) node {\huge.};
				\draw[edge][LavenderMagenta] (6,1.1) to (6.4,1.9);
				
			\end{tikzpicture}
			\caption{A butterfly minor $D'$ of type~\labelcref{itm:centre_2} where \labelcref{itm:prop_2_3} applies to all pairs of $[m']$ and the intervals intersect pairwise in two intervals in the proof of~\cref{finite}.} \label{fig:prop_2_3_2}
		\end{figure}
		
		\item[$D'$ is of type~\labelcref{itm:centre_3}]
		For every $v \in \bigcup_{j \in [p]} V(C_j)$ let $\gamma(v) \in [p]$ be the minimal index with $v \in V(C_{\gamma(v)})$.
		Then either $\gamma(a_i)\leq \gamma (b_i)$ or $\gamma(b_i)\leq \gamma (a_i)$.
		By Ramsey's theorem, \cref{ramsey1}, we can assume, up to relabelling of the elements in $[m]$ and up to reversing the labelling of the directed cycles $(C_j)_{j \in [p]}$, that $\gamma(a_i)\leq \gamma (b_i)$ for every $i \in [m']$.
		
		As provided by \cref{propgroundwork}, $\gamma(a_i)\neq \gamma(a_{i'})$ holds for every $i \neq i' \in [m']$.
		We assume, up to relabelling of $[m']$, that $\gamma(a_i) < \gamma(a_{i'})$ for every $i < i' \in [m']$.

		For every $i < i' \in [m']$ either $\gamma(b_i) < \gamma(a_{i'})$ or $\gamma(b_i) \geq \gamma(a_{i'})$ holds.
		By Ramsey's theorem, \cref{ramsey1}, we can assume, up to relabelling, that one property holds for every pair of distinct elements in $[m'']$.
		
		\begin{itemize}
			\item In the former case, i.e.\ $\gamma(b_i) < \gamma(a_{i'})$ for every $i < i' \in [m'']$: See~\cref{fig:centre_3_1}.
			Then $
			\gamma(a_1) \leq \gamma(b_1) < \gamma(a_2) \leq \gamma(b_2) < \dots$.
			We consider the subgraph
			\begin{equation*}
				\bigcup_{j \in [p]} C_j \cup \bigcup_{i \in [m''] \cap 2\NN} (F_i + (a_i, z_i) + (z_i, b_i))
			\end{equation*}
			and butterfly contract $C_{\gamma(a_i)-1}\cap C_{\gamma(a_i)}$ to a single vertex $z_i$ for every even $i \in [m'']$.
			Note that $C_{\gamma(a_i)-1}\cap C_{\gamma(a_i)}$ neither contains $a_i$ nor $b_i$ for every even $i \in [m'']$.
			
			For even $i \in [m'']$, the subgraph $H_i:=F_i \cup \bigcup_{\gamma(a_i) \leq j < \gamma(a_{i+2})} C_j$ is strongly connected and contains $z_i$, $z_{i+2}$ and $v_i$.
			We apply~\cref{prop3star} to obtain a butterfly minor $D_{i}$ of $H_i$ that is a double path containing $z_i, z_{i+2}, v_i$, the union of non-trivial double paths that intersect only in a common endpoint and whose other endpoints are $z_i, z_{i+2}, v_i$, or the union of a directed cycle $C$ of length $3$ and disjoint (possibly trivial) double paths having one endpoint in $V(C)$ and whose other endpoints are $z_i, z_{i+2}, v_i$.
			
			Then there exists an $n$-element subset $J \subseteq [m'']$ such that all $(D_j)_{j \in J}$ are of the same type and the elements of $J$ differ pairwise by at least two.
			We set $D_i':= D_i$ for every $i \in J$ and let $D_i'$ be the butterfly minor of $D_i$ that is a double path with endpoints $z_i, z_{i+2}$ for every $i \in [m''] \setminus J$.
			Then $\bigcup_{i \in [m'']} D_i'$ the desired butterfly minor shaped by a comb with $n$ teeth in $U$.
			
			\begin{figure}[ht]
				\begin{tikzpicture}

					\foreach \a in {1,3,5,7,9}{
						\draw[edge] ({\a+0.05},0) to ({\a+0.95},0);
						\draw[edge] ({\a+1},0.05) to ({\a+1},0.95);
						\draw[edge] ({\a+0.95},1) to ({\a+0.05},1);	
						\draw[edge] (\a,0.95) to ({\a},0.05);
						
						\draw[edge] ({\a+2},0) to ({\a+1},0);
						\draw[edge] ({\a+1},1) to ({\a+2},1);
					}		
					
					\draw[edge][PastelOrange] ({1*4-2},1.1) to ({1*4-1.1},2);
					\draw[edge][PastelOrange] ({1*4-1},1.9) to ({1*4-1},1.1);	
					\draw[edge][LavenderMagenta] ({4-3},0.95) to ({1*4-3},0.05);
					\draw[PastelOrange] ({1*4-1},2) node {\huge.};
					\draw[PastelOrange] ({4-1.4},2.2) node {$v_1$};
					
					\draw[edge][CornflowerBlue] ({2*4-2},1.1) to ({2*4-1.1},2);
					\draw[edge][CornflowerBlue] ({2*4-1},1.9) to ({2*4-1},1.1);	
					\draw[edge][LavenderMagenta] ({2*4-3},0.95) to ({2*4-3},0.05);
					\draw[CornflowerBlue] ({2*4-1},2) node {\huge.};
					\draw[CornflowerBlue] ({8-1.4},2.2) node {$v_2$};
					
					\draw[edge][LavenderMagenta] ({3*4-2},1.1) to ({3*4-1.1},2);
					\draw[edge][LavenderMagenta] ({3*4-1},1.9) to ({3*4-1},1.1);	
					\draw[edge][LavenderMagenta] ({3*4-3},0.95) to ({3*4-3},0.05);
					\draw[LavenderMagenta] ({3*4-1},2) node {\huge.};
					\draw[LavenderMagenta] ({12-1.4},2.2) node {$v_1$};

					\foreach \a in {2, 4, 6}{	
						\draw[LavenderMagenta] ({\a*2-3},-0.5) node {$z_\a$};
						
					}
					\draw[edge] (11,0.95) to (11,0.05);
				\end{tikzpicture}
				\caption{A butterfly minor $D'$ of type~\labelcref{itm:centre_3} where $\gamma(b_i) < \gamma(a_{i'})$ for every $i < i' \in [m'']$ in the proof of~\cref{finite}.} \label{fig:centre_3_1}
			\end{figure}
			
			\item In the latter case, i.e.\ $ \gamma(b_i) \geq \gamma(a_{i'})$ for every $i < i' \in [m'']$: See~\cref{fig:centre_3_2}. Then $\gamma(a_{m''}) \leq \gamma(b_i)$
			for every $i \in [m'']$ and thus $\gamma(a_{m''-1}) < \gamma(b_i)$
			for every $i \in [m'']$.
			In particular, all $b_i$ for $i \in [m''-1]$ are contained in $\bigcup_{i > \gamma(a_{m''-1})} V(C_i)$ and all $a_i$ for $i \in [m''-1]$ are contained in  $\bigcup_{i \leq \gamma(a_{m''-1})} V(C_i)$.
			
			\begin{figure}[ht]
				\begin{tikzpicture}

					\foreach \a in {1,3,5,7,9}{
						\draw[edge] ({\a+0.05},0) to ({\a+0.95},0);
						\draw[edge] ({\a+1},0.05) to ({\a+1},0.95);
						\draw[edge] ({\a+0.95},1) to ({\a+0.05},1);	
						\draw[edge] (\a,0.95) to ({\a},0.05);
						
						\draw[edge] ({\a+2},0) to ({\a+1},0);
						\draw[edge] ({\a+1},1) to ({\a+2},1);
					}		
					
					\draw[edge][PastelOrange] ({1+1},1.1) to ({1+3.9},2);
					\draw[edge][PastelOrange] ({1+4.1},2) to ({1+7},1.1);
					\draw[PastelOrange] (5,2) node {\huge.};	
					\draw[PastelOrange] (5,2.4) node {$v_1$};
					
					\draw[edge][CornflowerBlue] ({2+1},1.1) to ({2+3.9},2);
					\draw[edge][CornflowerBlue] ({2+4.1},2) to ({2+7},1.1);
					\draw[CornflowerBlue] (6,2) node {\huge.};	
					\draw[CornflowerBlue] (6,2.4) node {$v_2$};
					
					\draw[edge][LavenderMagenta] ({3+1},1.1) to ({3+3.9},2);
					\draw[edge][LavenderMagenta] ({3+4.1},2) to ({3+7},1.1);
					\draw[LavenderMagenta] (7,2) node {\huge.};	
					\draw[LavenderMagenta] (7,2.4) node {$v_3$};

					\draw[edge] (11,0.95) to (11,0.05);
				\end{tikzpicture}
				\caption{A butterfly minor $D'$ of type~\labelcref{itm:centre_3} where $\gamma(b_i) \geq \gamma(a_{i'})$ for every $i < i' \in [m'']$ in the proof of~\cref{finite}.} \label{fig:centre_3_2}
			\end{figure}
			
			By \cref{propinoutbranchingtree}, there are $T^{\text{in}}$ and $T^{\text{out}}$ as stated such that all $b_i$ are in $V(T^{\text{in}})$ and all $a_i$ are in $V(T^{\text{out}})$ for $i \in [m''-1]$.
			We consider the subgraph
			\begin{equation*}
				T^{\text{in}} \cup T^{\text{out}} \cup \bigcup_{i \in [m''-1]} (F_i + (a_i, z_i) + (z_i, b_i))
			\end{equation*}
			and butterfly contract $T^{\text{in}}$ and $T^{\text{out}}$ to their common root.
			This butterfly minor is shaped by a star with $m''-1$ teeth in $U$.
		\end{itemize}
	\end{description}
	This completes the proof.
\end{proof}

\bibliography{ref.bib}

\end{document}